\DeclareMathOperator{\Pic}{Pic}
\DeclareMathOperator{\lcm}{lcm}
\title[BHCR-Mirror Symmetry for K3 surfaces]{The Berglund-H\"ubsch-Chiodo-Ruan mirror symmetry for  K3 surfaces}
\author{Michela Artebani}
\address{Departamento de Matem\'atica, Universidad de Concepci\'on, Casilla 160-C, Concepci\'on, Chile}
\email{martebani@udec.cl}
\author{Samuel Boissi\`ere}
\address{Laboratoire de Math\'ematiques et Applications, UMR CNRS 6086,
			Universit\'e de Poitiers, T\'el\'eport 2, Boulevard Marie et Pierre Curie,
			F-86962 FUTUROSCOPE CHASSENEUIL}
\email{Samuel.Boissiere@math.univ-poitiers.fr}
\urladdr{http://www-math.sp2mi.univ-poitiers.fr/$\sim$sboissie/}
\author{Alessandra Sarti}
\address{Laboratoire de Math\'ematiques et Applications, UMR CNRS 6086,
			Universit\'e de Poitiers, T\'el\'eport 2, Boulevard Marie et Pierre Curie,
			F-86962 FUTUROSCOPE CHASSENEUIL}
\email{sarti@math.univ-poitiers.fr}
\urladdr{http://www-math.sp2mi.univ-poitiers.fr/$\sim$sarti/}
\newtheorem{lemma}{Lemma}
\newtheorem{pro}{Proposition}
\newtheorem{cor}{Corollary}
\newtheorem{theorem}{Theorem}[section]
\theoremstyle{definition}
\newtheorem{definition}[theorem]{Definition}
\newtheorem{example}[theorem]{Example}
\newtheorem{remark}[theorem]{Remark}
\DeclareMathOperator{\SL}{SL}
\DeclareMathOperator{\Cl}{Cl}
\DeclareMathOperator{\rank}{rk}
\DeclareMathOperator{\diag}{diag}
\DeclareMathOperator{\Fix}{Fix}
\DeclareMathOperator{\Ext}{Ext}
\DeclareMathOperator{\rk}{rk}
\DeclareMathOperator{\Aut}{Aut}
\newcommand{\IC}{\mathbb{C}}
\newcommand{\IQ}{\mathbb{Q}}
\newcommand{\IZ}{\mathbb{Z}}
\newcommand{\IR}{\mathbb{R}}
\newcommand{\IP}{\mathbb{P}}
\newcommand{\KK}{\mathbf{K}}
\newcommand{\calO}{\mathcal{O}}
\subjclass[2000]{Primary 14J28; Secondary 14J33, 14J50, 14J10}
\keywords{K3 surfaces, mirror symmetry, non-symplectic involutions}
\thanks{The first author has been 
supported by  Proyecto FONDECYT Regular N. 1110249}
\begin{document}

\begin{abstract}
We prove  that the mirror symmetry of  Berglund-H\"ubsch-Chiodo-Ruan, applied to K3 surfaces with a non-symplectic involution, coincides with the mirror symmetry described by Dolgachev and Voisin.
\end{abstract}

\maketitle

\section{Introduction}
Berglund and H\"ubsch in~\cite{BH} described a very concrete construction of mirror pairs of Calabi-Yau manifolds given as hypersurfaces in some weighted projective spaces.  Later, Chiodo and Ruan in~\cite{ChiodoRuan} proved that the transposition rule of Berglund-H\"ubsch provides pairs of Calabi-Yau manifolds whose Hodge diamonds have the symmetry required in mirror symmetry. 
In this paper we apply the transposition rule to certain K3 surfaces carrying a non-symplectic involution and we relate this to a mirror construction between families of lattice polarized K3 surfaces due to Dolgachev and Nikulin \cite{dn, nikulinmir, dolgachevmirror}, Voisin \cite{voisin} and Borcea \cite{Borcea}. Since in particular the results
of \cite{dolgachevmirror} and \cite[Lemma 2.5 and \S 2.6]{voisin} were fundamental for our Theorem \ref{main} (see subsection \ref{DVmirror}) we will refer to such families as ``Dolgachev-Voisin mirror families''.
Our main theorem is that the transposition rule by Berglund and H\"ubsch, in this case, provides pairs of K3 surfaces which belong to 
the Dolgachev-Voisin mirror families.

Let $W$ denote a Delsarte type polynomial, i.e. a polynomial having as many monomials as variables  (this will be called ``potential'' in the sequel, following the terminology of physicists). Assume that the matrix of exponents of $W$ is invertible, that $\{W=0\}$ has an isolated singularity at the origin and that it defines a well-formed hypersurface in some normalized weighted projective space. We denote by $\Aut(W)$ the group of diagonal symmetries of $W$, by $\SL(W)$ the group of diagonal symmetries of determinant one, and by $J_W$ the monodromy group of the affine Milnor fibre associated to $W$. For any subgroup $G\subset\Aut(W)$, we denote by $G^T$ the ``transposed'' group of automorphisms of the ``transposed'' potential $W^T$ (see section~\ref{s:BHCR} for their definition). The main result of this paper is the following.

\begin{theorem}\label{main}
Let $W$ be a K3 surface defined by a non-degenerate and invertible potential of the form:
$$
x^2=f(y,z,w).
$$
 in some weighted projective space.
Let $G_W$ be a subgroup of  $\Aut(W)$ such that $J_W\subset G_W\subset \SL(W)$. Put $\widetilde{G_W}:=G_W/J_W$ and $\widetilde{G_W^T}:=G_W^T/J_{W^T}$. Then the Berglund-H\"ubsch-Chiodo-Ruan mirror orbifolds $[W/\widetilde{G_W}]$ and $[W^T/\widetilde{G_W^T}]$ belong to the mirror families of
Dolgachev and Voisin.
\end{theorem}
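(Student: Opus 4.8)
The plan is to exhibit in both Berglund-H\"ubsch-Chiodo-Ruan orbifolds one and the same type of non-symplectic involution, to compute Nikulin's three invariants $(r,a,\delta)$ of the invariant lattice on each side, and to verify that these invariants are exchanged precisely by the Dolgachev-Voisin mirror rule, which on the level of $2$-elementary lattices reads $(r,a,\delta)\mapsto(20-r,a,\delta)$. First I would analyze the involution. Since $W$ has the shape $x^2=f(y,z,w)$, its exponent matrix is block diagonal, with a $1\times 1$ block $(2)$ for $x$ and a $3\times 3$ block $B$ for $f$; transposing preserves this block structure, so $W^T$ is again of the form $x^2=f^T(y,z,w)$. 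Hence both potentials admit the diagonal symmetry $\iota=\diag(-1,1,1,1)$. Because $\det\iota=-1$ we have $\iota\notin\SL(W)\supset G_W$, so $\iota$ descends to a nontrivial automorphism of $[W/\widetilde{G_W}]$, and similarly on the transposed side. I would then check that $\iota$ acts by $-1$ on the orbifold holomorphic $2$-form (it enters through the residue involving $x$), so that $\iota$ is non-symplectic and each orbifold is a K3 carrying the Dolgachev-Voisin lattice polarization given by its invariant lattice $S(\iota)$.

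With the involution in place, the task reduces to computing $(r,a,\delta)$ for $[W/\widetilde{G_W}]$ and $(r^T,a^T,\delta^T)$ for $[W^T/\widetilde{G_W^T}]$ and showing $r+r^T=20$, $a=a^T$, and $\delta=\delta^T$. The fixed locus of $\iota$ is supported on $\{x=0\}$, namely the branch curve $C=\{f=0\}$ in the weighted plane $\IP(q_y,q_z,q_w)$ modulo $\widetilde{G_W}$, together with the rational curves produced by a crepant resolution of the cyclic quotient singularities lying on it. Writing $\Fix(\iota)=C^{(g)}\sqcup L_1\sqcup\cdots\sqcup L_{k-1}$, Nikulin's relations give $r=10+k-g$ and $a=12-g-k$, so I would compute the genus $g$ of the branch curve via the weighted adjunction formula corrected for the orbifold points and their resolution, and the number $k$ from the weights together with the group $\widetilde{G_W}$.

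To match the two sides I expect to use the Chiodo-Ruan Hodge-number symmetry $h^{p,q}_{CR}[W/\widetilde{G_W}]=h^{2-p,q}_{CR}[W^T/\widetilde{G_W^T}]$ from \cite{ChiodoRuan}. Decomposing $H^2$ of the resolution into $\iota$-eigenspaces, the invariant part has rank $r$ and the anti-invariant part has rank $22-r$; tracking how the transposition interchanges the $(1,1)$-type contributions with the remaining Hodge pieces should force $r^T=20-r$. The preservation of $a$ and $\delta$ is the delicate point: I would follow the discriminant form of the $2$-elementary lattice $S(\iota)$ and show that the transposition rule induces an isometry of discriminant forms, equivalently that it preserves the number $a$ of $\IZ/2$-summands and the parity invariant $\delta$.

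I expect the discriminant and parity bookkeeping for $a$ and $\delta$, and in particular the invariance of $\delta$, to be the main obstacle, since this is where the arithmetic of the chain $J_W\subset G_W\subset\SL(W)$ and the explicit resolution geometry must be controlled most carefully; by contrast the equality $r+r^T=20$ should follow fairly directly from the Chiodo-Ruan symmetry. Once $(r,a,\delta)$ and $(20-r,a,\delta)$ are matched on the two sides, Nikulin's classification of $2$-elementary lattices identifies the two invariant lattices as a Dolgachev-Voisin mirror pair, which is exactly the assertion of the theorem.
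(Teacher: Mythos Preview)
Your setup is correct: both $W$ and $W^T$ carry the non-symplectic involution $\iota\colon x\mapsto -x$, and the problem does reduce to comparing Nikulin's invariants $(r,a,\delta)$ on the two sides. The central gap is your proposed derivation of $r+r^T=20$ from the Chiodo--Ruan theorem. For K3 surfaces the orbifold Hodge diamond is always the standard K3 diamond, so the identity $H^{p,q}_{\mathrm{CR}}[W/\widetilde{G_W}]\cong H^{2-p,q}_{\mathrm{CR}}[W^T/\widetilde{G_W^T}]$ is a tautology and carries no information; the paper says this explicitly right after stating that theorem. The numbers $r$ and $22-r$ are ranks of $\iota$-eigenspaces inside $H^2$, not Hodge numbers of the orbifold, and nothing in Chiodo--Ruan tells you how transposition interacts with the $\iota$-eigenspace decomposition. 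To make such an argument you would need an $\iota$-equivariant refinement of Chiodo--Ruan, or a detour through the Borcea--Voisin threefold $(X\times E)/(\iota\times(-1))$; neither is invoked, and neither would easily yield $\delta$. Your plan for $a$ and $\delta$ (``show that the transposition rule induces an isometry of discriminant forms'') is likewise a statement of what is to be proved, with no mechanism offered. A smaller point: the fixed locus of $\iota$ on $W$ is not always supported on $\{x=0\}$ alone; it can contain a second coordinate curve, which affects the count of components.

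The paper's proof is of a completely different nature: it is a finite case-by-case verification. One classifies all non-degenerate invertible potentials $x^2=f(y,z,w)$ defining K3 surfaces (finitely many weight systems, tabulated in Tables~\ref{K3fermatmirror}--\ref{K3chainmirror}), and for each computes $(r,a,\delta)$ by analyzing the singularities via Lemma~\ref{sing}, the branch curve in $\IP(w_2,w_3,w_4)$, and the invariant lattice via Lemma~\ref{invlatt}. When $\SL(W)/J_W$ is trivial, $W$ and $W^T$ are already BHCR-mirrors and one reads off from the tables that their invariants are $(r,a,\delta)$ and $(20-r,a,\delta)$. When $\SL(W)/J_W$ is nontrivial, one additionally computes the invariants of the induced involution on the resolution of $W/\widetilde{G}$: the rank $r$ from counting $j$-orbits of exceptional curves, $g$ via Riemann--Hurwitz on the branch curve, and $\delta$ either from Proposition~\ref{delta} (for cyclic $\widetilde{G}$ of odd prime order) or by an explicit lattice computation, sometimes over a fake weighted projective plane. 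There is no uniform Hodge-theoretic shortcut.
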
 

The Berglund-H\"ubsch-Chiodo-Ruan (BHCR for short) mirror symmetry applies to Calabi-Yau varieties in weighted projective spaces which are not necessarily Gorenstein. As remarked by Chiodo and Ruan in \cite[Section 1]{ChiodoRuan}, this is the main difference with Batyrev mirror symmetry~\cite{batyrev}.
Most of our  K3 surfaces are not contained in a Gorenstein weighted projective space.

The paper is organized as follows.
In section 2 we give some preliminaries about hypersurfaces in weighted projective spaces,
potentials and the Berglund-H\"ubsch construction.
In section 3 we describe the group $\Aut(W)$ of diagonal automorphisms of a potential 
and we define the transposed group $G^T$ of a subgroup $G$ of $\Aut(W)$.
Section 4 contains preliminary facts about non-symplectic involutions on K3 surfaces and 
introduces the Dolgachev-Voisin mirror construction.
Section 5 deals with K3 surfaces defined by a potential as in the statement of Theorem \ref{main}:
we study their singularities and we determine the basic invariants of the non-symplectic involution $x\mapsto -x$.
In section 6 we give the proof of Theorem \ref{main}.\\

{\em Aknowledgements.} We thank Alessandro Chiodo and Antonio Laface for many helpful discussions.

\section{The Berglund-H\"ubsch-Chiodo-Ruan construction}
\label{s:BHCR}
\subsection{Hypersurfaces in weighted projective spaces}\label{weighted} 
We start recalling some basic facts about hypersurfaces in weighted projective spaces, see for example \cite{fletcher}.
Let $x_1,\ldots,x_n$ be affine coordinates on $\IC^{n}$, $n\geq 3$, and let $(w_1,\ldots,w_n)$ be a sequence of positive weights. The group $\IC^*$ acts on $\IC^n$ by  
$$
\lambda (x_1,\ldots,x_n)=(\lambda^{w_1}x_1,\ldots,\lambda^{w_n}x_n)
$$ 
 and the {\it weighted projective space} $\IP(w_1,\ldots,w_n)$ is the quotient $(\IC^{n}\backslash\{0\})/\IC^*$. The weighted projective space is called {\it normalized} if 
$$
\gcd(w_1,\ldots,\widehat{w_i},\ldots,w_n)=1\,\,\mbox{for }\,i=1,\dots,n.
$$  
Weighted projective spaces are singular in general and the singularities arise only on the fundamental simplex $\Delta$ with vertices in the points $P_i:=(0,\ldots,0,1,0,\ldots,0)$, $i=1,\dots,n$. The vertices are singularities of type $1/w_i(w_1,\ldots,\widehat{w_i},\ldots,w_n)$ and they are not necessarily isolated, since the higher dimensional toric strata of $\Delta$ can be singular too. For example, if $h_{i,j}:=\gcd(w_i,w_j)>1$,  then the generic point of the edge $P_iP_j$ is a singularity of type $1/h_{i,j}(w_1,\ldots,\widehat{w_i},\ldots,\widehat{w_j},\ldots, w_n)$. The weighted projective space $\IP(w_1,\ldots,w_n)$ has Gorenstein singularities if and only if $w_j|\sum_{i=1}^{n}w_i$ for all $j$. This is also equivalent to say that the weighted projective space is Fano or finally, regarding $\IP(w_1,\ldots,w_n)$ as toric variety, that its associated polytope is reflexive \cite[Section 3.5]{coxkatz}. 

 A quasihomogeneous polynomial $W(x_1,\ldots,x_n)$ of total degree $d$ defines a hypersurface in 
$\IP(w_1,\ldots,w_n)$, which is also denoted by $W$ in the sequel.
\begin{definition}
The hypersurface $W$ is called
\begin{itemize} 
\item {\it well-formed}  if $\IP(w_1,\ldots,w_n)$ is normalized and 
$$\gcd(w_1,\ldots,\widehat{w_i},\ldots,\widehat{w_j},\ldots, w_n) \mbox{ divides } d$$
for all $i,j=1,\dots,n$;
\item  {\it quasismooth} if it is well-formed and the polynomial $W$ is non-degenerate, i.e. its affine cone is smooth outside its vertex $(0,\ldots,0)$;
\item  {\it Calabi-Yau} ({\it K3 surface} in the two-dimensional case) if it 
has canonical singularities (in particular $W$ is Gorenstein), its canonical bundle is trivial and $H^i(W,\calO_W)=0$ for all $i=1,\ldots,n-3$.
\end{itemize} 
\end{definition}
Observe that by \cite[Lemma 1.12]{cortigo} a well-formed and quasismooth hypersurface $W$ in $\IP(w_1,\ldots,w_n)$ is Calabi-Yau if and only if $d=\sum_{i=1}^{n}w_i$. 
Reid in \cite{reid} and Yonemura in  \cite{yonemura}  give a list of all possible
families of K3 surfaces in  weighted projective spaces. These are 95 in total and only 14 of the weighted projective spaces are Gorenstein. For each type Reid describes the singularities of the K3 surface. By \cite{cortigo} the 95 projective spaces have canonical singularities, and in fact one can determine 104 families of weights such that the weighted projective spaces have canonical singularities. However in 9 cases
one can not obtain K3 surfaces with canonical singularities \cite[Theorem 1.17]{cortigo}.

Finally, we recall that the genus of a smooth curve $C_d$ of total degree $d$ in $\IP(w_1,w_2,w_3)$ is given by the formula:
\begin{eqnarray}\label{genusweighted}
g(C_d)=\frac{1}{2}\left(\frac{d^2}{w_1w_2w_3}-d\sum_{i>j}\frac{\gcd(w_i,w_j)}{w_iw_j}+\sum_{i=1}^{3}\frac{\gcd(d,w_i)}{w_i}-1 \right).
\end{eqnarray}


\subsection{Invertible potentials}\label{general}
We briefly recall the mirror construction of Berglund-H\"ubsch in \cite{BH} and  Chiodo-Ruan in \cite{ChiodoRuan}.
Consider a {\it potential}:
$$
W(x_1,\ldots,x_n)=\sum_{i=1}^{n}\prod_{j=1}^{n}x_j^{a_{ij}},
$$
that is a polynomial in $n$ variables containing $n$ monomials. Since we have $n$ monomials 
it is not a restriction to consider all the coefficients to be equal to $1$, so that 
a potential is identified by the matrix 
$
A:=(a_{ij})_{i,j=1,\ldots,n}
.$
The potential is called {\it invertible} if the matrix $A$ is invertible over $\IQ$. In this case 
we denote by
$
A^{-1}:=(a^{ij})_{i,j=1,\ldots,n}
$
the inverse matrix and define the {\it charge} $q_i:=\sum_{j=1}^{n} a^{ij}$ as the sum of the entries of the $i$-th row of $A^{-1}$.
Clearly the charges $q_i$ satisfy:
$$
A \left(\begin{array}{c} q_1\\\vdots\\q_n\end{array}\right)=\left(\begin{array}{c} 1\\\vdots\\1\end{array}\right).
$$
Let $d$ be the least common denominator of the charges and let $w_i:=dq_i$. Then $\{W=0\}$ defines a hypersurface $W$ in $\IP(w_1,\ldots,w_n)$ of total degree $d$, which we assume to be well-formed and quasismooth. 
Observe that, by \cite[Lemma 1.12]{cortigo}, $W$ is Calabi-Yau if and only if $\sum_iq_i=1$. 
By \cite[Proposition 6]{dimca}, if the weighted projective space is normalized and the hypersurface is quasismooth of dimension $\geq 3$, then it is well-formed. In the case of K3 surfaces this is also true and can be verified by checking in Reid's list. 

By \cite[Theorem 1]{kreuzerskarke} a potential $W$  is invertible and non-degenerate (i.e. the corresponding hypersurface is quasismooth) if and only if it can be written as a sum of invertible potentials of {\it atomic types}:
$$
\begin{array}{lll}
W_{fermat}&:=&x^a,\\
W_{loop}&:=&x_1^{a_1}x_2+x_2^{a_2}x_3+\ldots +x_{n-1}^{a_{n-1}}x_n+x_n^{a_n}x_1,\\
W_{chain}&:=&x_1^{a_1}x_2+x_2^{a_2}x_3+\ldots +x_{n-1}^{a_{n-1}}x_n+x_n^{a_n}.
\end{array}
$$ 
If $W$ is a fermat type polynomial (i.e. sum of $W_{fermat}$) then $\{W=0\}$ defines a hypersurface in a Gorenstein weighted projective space. In the other cases this is not true in general.

\subsection{The Berglund-H\"ubsch-Chiodo-Ruan construction}


Given an invertible and non-degenerate potential $W$ as in the previous subsection, we consider the group of diagonal automorphisms:
$$
\Aut(W):=\{\gamma=(\gamma_1,\ldots,\gamma_n)\in(\IC^*)^n\,|\,W(\gamma_1x_1,\ldots,\gamma_nx_n)=W(x_1,\ldots,x_n)\}
$$
and its subgroup
$$
\SL(W):=\Aut(W)\cap \SL_{n}(\IC).
$$
To each column of $A^{-1}$  we associate the diagonal matrix 
$$
\rho_j:=\diag(\exp(2\pi i a^{1,j}),\ldots,\exp(2\pi i a^{n,j}))\in\Aut(W)
$$
and we define the matrix $j_W$ to be the product 
$$
\rho_1\cdots \rho_n=\diag(\exp(2\pi i q_1),\ldots,\exp(2\pi i q_n)).
$$ 
Observe that  the group $J_W$ generated by $j_W$ is cyclic of order $d$ and 
acts trivially on the hypersurface $W$, 
since it acts trivially on the weighted projective space $\IP(w_1,\dots,w_n)$. 
In what follows we assume the hypersurface $W$ to be Calabi-Yau. Then $\sum_{i} q_i=1$, so that  $J_W\subset\SL(W)$.

Let $G_W$ be a group of diagonal automorphisms such that $J_W\subset G_W\subset \SL(W)$ 
and  let $\widetilde{G_W}:=G_W/J_W$.
We will now construct a potential $W^T$ and a group $G_{W}^T$. The potential $W^T$ is defined by transposing the matrix $A$:
$$
W^T:=W^T(x_1,\ldots,x_n)=\sum_{i=1}^{n}\prod_{j=1}^{n}x_j^{a_{ji}}.
$$ 
Similarly as before, we denote by $q_j^T$ the charge of $W^T$, which is the sum of the entries of the $j$-th column of $A^{-1}$.  
Observe that $\sum_{j} q_j=\sum_{j} q_j^T=\sum_{i,j}a^{i,j}=1$.
Since the potential $W^T$ is non-degenerate by the classification in \cite{kreuzerskarke} and the charges satisfy $\sum_{j} q_j^T=1$, then  it is easy to show that the equation $\{W^T=0\}$ defines a variety in a normalized weighted projective space.   
By \cite[Proposition 6]{dimca}, if $n\geq 5$ the hypersurface is well-formed, so that the potential $W^T$ defines a Calabi-Yau variety.  
This is true also if $n=3,4$, as can be checked by a quick case-by-case analysis.  
\begin{remark}
Without the condition $\sum_{i}q_i=1$ it is not true that the equation $\{W^T=0\}$
 defines a variety in a normalized projective space. For example
 ${W=x_1^5x_2+x_2^2x_3+x_3^3x_4+x_4^9}$ defines a surface in $\IP(7,19,16,6)$ and $W^T$ defines a surface in $\IP(9,18,9,4)$, which is clearly not normalized.
\end{remark}
The group $G_W^T$ is defined by Krawitz in \cite{krawitz} as:
$$
G_W^T=\left\{\prod_{j=1}^{n}(\rho_j^T)^{m_j}\,|\,\prod_{j=1}^n x_j^{m_j} \,\mbox{is}\, G_W\mbox{-invariant}\right\},
$$
where the definition of the automorphisms $\rho_j^T$ of $W^T$ is similar to the definition of $\rho_j$ using  the matrix 
$A^T$. Equivalent definitions for the group $G_W^T$ will be given in the next section.
The group satisfies $J_{W^T}\subset G_W^T\subset \SL_{W^T}$. Putting $\widetilde{G_W^T}:=G_W^T/J_{W^T}$, we have the following result.
\begin{theorem}\cite[Theorem 2]{ChiodoRuan}
The Calabi-Yau orbifolds $[W/\widetilde{G_W}]$ and $[{W^T}/\widetilde{G_W^T}]$ form a mirror pair,
i.e. we have
$$
H_{\rm{CR}}^{p,q}([W/\widetilde{G_W}],\IC)\cong H_{\rm{CR}}^{n-2-p,q}([{W^T}/\widetilde{G_W^T}],\IC)
$$
where $H_{\rm{CR}}(-,\IC)$ stands for the Chen-Ruan orbifold cohomology.
\end{theorem}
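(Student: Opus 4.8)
The plan is to prove the isomorphism not directly on the Calabi--Yau orbifolds but by routing both sides through an intermediate, purely combinatorial object: the bigraded state space of the associated Landau--Ginzburg orbifold. Concretely, I would attach to the pair $(W,G_W)$ a vector space $\mathcal{A}_{W,G_W}$ carrying two gradings, so that the theorem factors as the composition of two independent statements. First, a \emph{cohomological Landau--Ginzburg/Calabi--Yau correspondence} identifying $H_{\mathrm{CR}}^{p,q}([W/\widetilde{G_W}],\IC)$ with a graded piece of $\mathcal{A}_{W,G_W}$; and second, a \emph{combinatorial mirror duality} $\mathcal{A}_{W,G_W}\cong\mathcal{A}_{W^T,G_W^T}$ that exchanges the two gradings. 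Applying the first statement on both sides and inserting the duality in the middle then yields the claimed Hodge-diamond symmetry.

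For the first step I would use the inertia-stack decomposition of Chen--Ruan cohomology. Since $\widetilde{G_W}$ is finite abelian,
$$
H_{\mathrm{CR}}^{p,q}([W/\widetilde{G_W}],\IC)\cong\bigoplus_{g\in\widetilde{G_W}}\left(H^{p-\age(g),\,q-\age(g)}(\Fix(g),\IC)\right)^{\widetilde{G_W}},
$$
where $\Fix(g)$ is the coordinate subvariety fixed by $g$ and $\age(g)$ is the age shift. The essential point is then to identify each age-shifted summand with a graded piece of the Milnor ring of the restriction of the potential to the $g$-fixed locus, tensored with the invariant volume form on $\Fix(g)$: this is the Griffiths--Steenbrink residue description of the (primitive) cohomology of a quasismooth hypersurface, applied sector by sector. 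The untwisted sector recovers the ambient cohomology together with the primitive part coming from $\mathrm{Jac}(W)$, while the twisted sectors match the ``broad'' sectors of the state space.

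The heart of the proof is the second step, the combinatorial duality. Here I would invoke the transpose relationship between $A$ and $A^T$ together with the perfect pairing between $\Aut(W)$ and the monomials of $W^T$ that defines $G_W^T$. This produces an explicit bijection between the $g$-sectors of $\mathcal{A}_{W,G_W}$ and the sectors of $\mathcal{A}_{W^T,G_W^T}$, under which a class of bidegree $(p,q)$ is sent to one of bidegree $(\widehat c-p,q)$, where the central charge $\widehat c=\sum_i(1-2q_i)=n-2$ equals the complex dimension precisely because $\sum_iq_i=1$. One must check that this bijection is $G$-equivariant, descends to the invariant subspaces, and interchanges the Milnor-ring gradings on the two sides correctly.

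The main obstacle I anticipate is exactly the degree bookkeeping in this duality: one has to verify that the age of a symmetry $g$ on the $W$-side is converted into the polynomial degree of the dual monomial on the $W^T$-side, and conversely, so that the two gradings are genuinely swapped after the flip $p\mapsto\widehat c-p$. The broad sectors --- those with positive-dimensional fixed locus, where the restricted Milnor ring is nontrivial --- are the delicate case, since there both the age shift and the Jacobian-ring degree contribute simultaneously, and matching them requires the full strength of Krawitz's transposition of the pairing. By contrast, once the state-space isomorphism is established together with its gradings, reconciling it with the Chen--Ruan conventions and isolating primitive versus ambient cohomology should be comparatively routine.
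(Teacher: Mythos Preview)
The paper does not prove this theorem: it is quoted verbatim as \cite[Theorem~2]{ChiodoRuan} and used as a black box, with no argument supplied. So there is nothing in the paper to compare your proposal against.

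That said, your outline is correct and is essentially the strategy of the original proof in Chiodo--Ruan (building on Krawitz): one passes through the bigraded Landau--Ginzburg state space $\mathcal{A}_{W,G_W}$, establishes a sector-by-sector LG/CY correspondence identifying it with the Chen--Ruan cohomology of $[W/\widetilde{G_W}]$, and then applies Krawitz's combinatorial mirror map $\mathcal{A}_{W,G_W}\cong\mathcal{A}_{W^T,G_W^T}$, which swaps the two gradings and produces the shift $p\mapsto (n-2)-p$. Your identification of the broad sectors as the delicate point is accurate; that is indeed where the age/Jacobian-degree bookkeeping has to be checked carefully.
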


The previous result clearly gives no information in the case of K3 surfaces, since all K3 surfaces have the same Hodge diamond.
However, it is a strong motivation for considering this as a good mirror correspondence even in the two-dimensional case.

We now show that the action of  $\SL(W)$ is symplectic.
\begin{pro}\label{actions}
Let $W$ be a non-degenerate potential defining a Calabi-Yau manifold 
in $\IP(w_1,\ldots,w_n)$. Then the action
of $\SL(W)$ on the volume form is trivial.  
\end{pro}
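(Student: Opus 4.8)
The plan is to realise the holomorphic volume form on the Calabi-Yau hypersurface as an explicit Griffiths-type residue and then to read off directly how a diagonal automorphism rescales it. First I would introduce, on $\IC^n$, the $(n-1)$-form obtained by contracting $dx_1\wedge\cdots\wedge dx_n$ with the weighted Euler vector field $E=\sum_{i=1}^n w_i x_i\,\partial_{x_i}$, namely
$$
\Omega_0=\sum_{i=1}^n (-1)^{i-1} w_i x_i\, dx_1\wedge\cdots\wedge\widehat{dx_i}\wedge\cdots\wedge dx_n.
$$
Under the $\IC^*$-scaling $x_i\mapsto\lambda^{w_i}x_i$ each summand is multiplied by $\lambda^{\sum_j w_j}=\lambda^d$, so $\Omega_0$ is quasihomogeneous of weight $d$; since $W$ has weight $d$ as well, the meromorphic form $\Omega_0/W$ is invariant under the $\IC^*$-action and therefore descends to $\IP(w_1,\ldots,w_n)$. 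Here the Calabi-Yau condition $d=\sum_i w_i$ (equivalently $\sum_i q_i=1$) is exactly what guarantees this invariance, and hence that the Poincaré residue $\Omega:=\mathrm{Res}(\Omega_0/W)$ is a well-defined holomorphic $(n-2)$-form on $\{W=0\}$; this is the volume form in the statement.

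The key computation is then to track the effect of $\gamma=(\gamma_1,\ldots,\gamma_n)\in\Aut(W)$, acting by $x_i\mapsto\gamma_i x_i$, on this residue. Since $\gamma^*(dx_i)=\gamma_i\,dx_i$, the $i$-th summand of $\Omega_0$ is multiplied by $\gamma_i\prod_{j\neq i}\gamma_j=\prod_{j=1}^n\gamma_j$, so $\gamma^*\Omega_0=\bigl(\prod_j\gamma_j\bigr)\Omega_0=\det(\gamma)\,\Omega_0$. Because $\gamma\in\Aut(W)$ fixes $W$, the quotient transforms by $\gamma^*(\Omega_0/W)=\det(\gamma)\,\Omega_0/W$, and, the automorphism preserving $\{W=0\}$, taking residues gives $\gamma^*\Omega=\det(\gamma)\,\Omega$. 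Consequently, for $\gamma\in\SL(W)=\Aut(W)\cap\SL_n(\IC)$ one has $\det(\gamma)=1$, whence $\gamma^*\Omega=\Omega$: the action on the volume form is trivial.

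I expect the only delicate point to be the justification that $\Omega$ is genuinely the nowhere-vanishing holomorphic volume form on the Calabi-Yau orbifold, rather than merely a meromorphic residue on a singular ambient space: one must check that the residue is taken along a quasismooth hypersurface and extends across the quotient singularities of $\IP(w_1,\ldots,w_n)$, using quasismoothness together with the canonical (Gorenstein) nature of the singularities recalled in Section~\ref{weighted}. The transformation law itself is a formal consequence of the chain rule and requires no further input; all the geometric content is concentrated in setting up the residue correctly and in the single identity $d=\sum_i w_i$.
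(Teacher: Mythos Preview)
Your argument is correct and essentially equivalent to the paper's, but organised differently. The paper works directly with the local Poincar\'e residue
\[
\xi=\frac{dx_2\wedge\cdots\wedge dx_{n-1}}{\partial W/\partial x_n}
\]
in the affine chart $x_1\neq 0$, normalises $g=(\exp(2\pi i\alpha_1),\ldots,\exp(2\pi i\alpha_n))$ so that its first entry is $1$ (multiplying by $\exp(-2\pi i\alpha_1/w_1)$, which rescales $W$ by $\exp(-2\pi i\alpha_1 d/w_1)$), and then tracks the phases to find that $\xi$ is multiplied by $\exp(2\pi i(\alpha_1+\cdots+\alpha_n))=1$. Your version replaces this bookkeeping by the global Griffiths residue $\Omega=\mathrm{Res}(\Omega_0/W)$ and a one-line computation showing $\gamma^*\Omega_0=\det(\gamma)\,\Omega_0$; this is cleaner and makes the role of the determinant completely transparent, at the cost of invoking the residue construction on a weighted projective space. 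The paper's local $\xi$ is precisely your $\Omega$ written in the chart $x_1\neq 0$, so the two proofs compute the same quantity from two ends of the same formula.
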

\proof
We can write the volume form locally for $x_1\not=0$ and 
$\frac{\partial W}{\partial x_n}\not= 0$ as
$$
\xi:=\frac{dx_2\wedge\ldots\wedge dx_{n-1}}{\frac{\partial W}{\partial x_n}}.
$$
Let $g=(\exp(2\pi i\alpha_1),\ldots,\exp(2\pi i\alpha_n))\in \SL(W)$. We can normalize $g$ 
multiplying by $\exp(2\pi i (-\alpha_1/w_1))$, so that we obtain
$g=(1,\exp(2\pi i \beta_2),\ldots, \exp(2\pi i \beta_{n}))$ with $\beta_i=\alpha_i-(w_i/w_1)\alpha_1$. If we apply this transformation to $W$, this is    multiplied by $\exp(2d\pi i (-\alpha_1/w_1))$.
We have that 
$$
g\frac{\partial W}{\partial x_n}=\exp(2\pi i(-\beta_n-(\alpha_1/w_1)d))W.
$$
Hence the form $\xi$ is multiplied by $\exp(2\pi i\delta)$, with
$$
\delta=\beta_1+\ldots+\beta_{n}+\frac{\alpha_1}{w_1}d=\alpha_1+\alpha_2+\ldots+\alpha_n\in \IZ.
\qed$$

\section{The group of diagonal automorphisms}
\subsection{Description of $\Aut(W)$}
Let $W:\IC^n\to \IC$ be a non-degenerate, invertible potential
and let $A=(a_{ij})_{i,j}\in {\rm GL}(n,\IQ)$ be the associated matrix. 
In this section we will describe the group $\Aut(W)$
of diagonal automorphisms of $W$ and its subgroups.
We start observing the following: 
\begin{lemma} $\Aut(W)$ is finite. 
\end{lemma}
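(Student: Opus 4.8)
The plan is to translate the invariance condition into explicit multiplicative relations among the coordinates $\gamma_j$ and then to recognise $\Aut(W)$ as a finite abelian group attached to the integer matrix $A$. First I would expand
$$
W(\gamma_1 x_1,\ldots,\gamma_n x_n)=\sum_{i=1}^{n}\Bigl(\prod_{j=1}^{n}\gamma_j^{a_{ij}}\Bigr)\prod_{j=1}^{n}x_j^{a_{ij}}.
$$
Because $A$ is invertible its rows are pairwise distinct, so the $n$ monomials $\prod_{j}x_j^{a_{ij}}$ are distinct and hence linearly independent as functions on $\IC^n$. Comparing the coefficients of these monomials shows that $\gamma=(\gamma_1,\ldots,\gamma_n)$ lies in $\Aut(W)$ if and only if $\prod_{j=1}^{n}\gamma_j^{a_{ij}}=1$ for every $i=1,\ldots,n$.

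Next I would pin down the moduli of the $\gamma_j$. Taking absolute values in these relations and setting $u_j:=\log|\gamma_j|$ gives the homogeneous linear system $\sum_{j}a_{ij}u_j=0$ for all $i$; since $A$ is invertible this forces $u_j=0$, i.e.\ $|\gamma_j|=1$ for all $j$. Thus every element of $\Aut(W)$ has all coordinates on the unit circle, and I may write $\gamma_j=\exp(2\pi i\,t_j)$ with $t_j\in\IR$. The relations $\prod_j\gamma_j^{a_{ij}}=1$ then become $\sum_{j}a_{ij}t_j\in\IZ$ for all $i$, that is $A\,\mathbf{t}\in\IZ^n$ for the vector $\mathbf{t}=(t_1,\ldots,t_n)$, equivalently $\mathbf{t}\in A^{-1}\IZ^n$. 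In particular each $t_j$ is rational and each $\gamma_j$ is a root of unity.

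Finally I would count. The exponents $a_{ij}$ are non-negative integers, so $A\IZ^n\subseteq\IZ^n$ and hence $\IZ^n\subseteq A^{-1}\IZ^n$. Since $\gamma$ depends only on $\mathbf{t}$ modulo $\IZ^n$, the assignment $\gamma\mapsto\mathbf{t}\bmod\IZ^n$ identifies $\Aut(W)$ with the quotient $(A^{-1}\IZ^n)/\IZ^n$, and multiplication by $A$ gives an isomorphism $(A^{-1}\IZ^n)/\IZ^n\cong\IZ^n/A\IZ^n$. The latter is a finite abelian group of order $|\det A|$, which proves finiteness and incidentally computes $|\Aut(W)|=|\det A|$.

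I expect the only delicate point to be the linear-independence step used for the coefficient comparison: one must be sure the rows of $A$ are genuinely distinct, which is precisely what invertibility guarantees. Everything after that is elementary linear algebra over $\IZ$, so I anticipate no real obstacle; the substance of the argument is simply recognising the defining equations as the lattice condition $A\,\mathbf{t}\in\IZ^n$.
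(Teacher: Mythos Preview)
Your proof is correct and follows essentially the same route as the paper: derive the relations $\prod_j\gamma_j^{a_{ij}}=1$, take moduli and use invertibility of $A$ to force $|\gamma_j|=1$, then rewrite the conditions as $A\mathbf{t}\in\IZ^n$ and identify $\Aut(W)$ with $A^{-1}\IZ^n/\IZ^n$. The paper's proof is slightly terser---it stops at ``every element has finite order'' and only records the identification $\Aut(W)\cong A^{-1}\IZ^n/\IZ^n$ and the count $|\Aut(W)|=|\det A|$ in the paragraph immediately after the lemma---whereas you fold that identification into the proof itself, which is arguably cleaner since ``abelian with all elements of finite order'' does not by itself imply finiteness.
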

\begin{proof} Since $\Aut(W)$ is abelian, it is enough to prove that 
any of its elements has finite order. 
If $\gamma\in \Aut(W)$ then $\prod_{j=1}^n \gamma_j^{a_{ij}}=1$ for any $i\in \{1,\dots,n\}$,
in particular $\prod_{j=1}^n |\gamma_j|^{a_{ij}}=1$.
Thus, taking the logarithm we obtain that 
$$(\ln|\gamma_1|,\dots,\ln|\gamma_n|)\in \ker(A)=\{0\},$$
which implies that $|\gamma_i|=1$.
Thus $\gamma_i=\exp(2\pi i a_i)$, $a_i\in \IR$, and the 
previous condition on $\gamma$ can be translated as $A\cdot a\in \IZ^n$, 
where $a=(a_1,\dots,a_n)$.
Since $A$ has integral entries, then $a\in \IQ^n$, so that $\gamma$ has finite order.
\end{proof}
After writing $\gamma=(\exp(2\pi i a_1),\dots,\exp(2\pi i a_n))$ with $a_i\in \IQ$, we can identify $\Aut(W)$ with 
$$\{a=(a_1,\dots,a_n)\in (\IQ/\IZ)^n|A\cdot a\in \IZ^n\}=A^{-1}\IZ^n/\IZ^n.$$
In particular, $|\Aut(W)|=|\det(A)|$.
Similarly, we will identify $\Aut(W^T)$ with $(A^T)^{-1}\IZ^n/\IZ^n$.
We observe that, since $A$ and $A^T$ have the same Smith normal form, then $\Aut(W)\cong \Aut(W^T)$. 

By means of the previous description,  since $\Aut(W)$ is generated by the columns of $A^{-1}$, 
we obtain the following result (see also \cite[Lemma 1.6]{krawitz}).
\begin{pro}\label{order}\text{}
\begin{itemize}
\item[1)] $\Aut(W_{fermat})\cong\IZ/a\IZ$ and a generator is $\frac{1}{a}$,
\item[2)] $\Aut(W_{loop})\cong\IZ/(a_1\cdots  a_n+(-1)^{n+1})\IZ$ 
and a generator is $(\varphi_1,\dots,\varphi_n)$,
where
$$\varphi_1:=\frac{(-1)^n}{\Gamma},\quad \varphi_i:=\frac{(-1)^{n+1-i}a_1\cdots a_{i-1}}{\Gamma},\ i\geq 2.$$
\item[3)] $\Aut(W_{chain})\cong \IZ/(a_1\cdots a_n)\IZ$ and a generator 
is given by $(\varphi_1,\dots,\varphi_n)$, where
$$\varphi_i:=\frac{(-1)^{n+i}}{a_i\cdots a_n}.$$
\end{itemize}
\end{pro}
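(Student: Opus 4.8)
The plan is to reduce each case to an explicit computation with the exponent matrix $A$, using the two facts established above: $\Aut(W)\cong A^{-1}\IZ^n/\IZ^n$ has order $|\det A|$, and it is generated by the columns of $A^{-1}$ taken modulo $\IZ^n$. The Fermat case is then immediate: here $A=(a)$ is the $1\times 1$ matrix, so $A^{-1}=(1/a)$, $\det A=a$, and the single column $1/a$ generates a group isomorphic to $\IZ/a\IZ$.

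For the loop and chain I would first write down $A$. For the loop,
$$
A=\begin{pmatrix} a_1 & 1 & & & \\ & a_2 & 1 & & \\ & & \ddots & \ddots & \\ & & & a_{n-1} & 1 \\ 1 & & & & a_n \end{pmatrix},
$$
and for the chain the same matrix with the lower-left entry $1$ replaced by $0$. Expanding $\det A$ along the first column gives, in the loop case, the diagonal product $a_1\cdots a_n$ from the $(1,1)$ entry, plus the contribution $(-1)^{n+1}$ from the corner entry $1$, whose minor is lower-triangular with ones on the diagonal; hence $\det A=\Gamma:=a_1\cdots a_n+(-1)^{n+1}$. The chain matrix is upper triangular, so $\det A=a_1\cdots a_n$. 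This already yields the stated orders of the two cyclic groups, once cyclicity is established.

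To identify the generator I would verify directly that the candidate vector $\varphi=(\varphi_1,\dots,\varphi_n)$ satisfies $A\varphi=\pm e_n\in\IZ^n$, so that $\varphi$ represents an element of $\Aut(W)$ and is, up to sign, the last column of $A^{-1}$. The verification is a telescoping cancellation: for $1\le i\le n-1$ the $i$-th row contributes $a_i\varphi_i+\varphi_{i+1}$, and the alternating sign pattern of the $\varphi_i$ is chosen precisely so that these consecutive terms cancel; the final row then produces $\pm 1$ after substituting $\Gamma=a_1\cdots a_n+(-1)^{n+1}$ in the loop case, respectively $a_n\varphi_n=1$ in the chain case. Concretely one finds $A\varphi=-e_n$ for the loop and $A\varphi=e_n$ for the chain.

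The key point, and the only real content beyond this bookkeeping, is to show that $\varphi$ generates the whole group, i.e.\ that each group is cyclic. Since the columns of $A^{-1}$ generate $\Aut(W)$ and $\varphi$ is one of them up to sign, it suffices to check that the order of $\varphi$ in $(\IQ/\IZ)^n$ equals $|\det A|=|\Aut(W)|$. That order is the least common multiple of the orders of the components $\varphi_i$. In the loop case the component $\varphi_1=(-1)^n/\Gamma$ already has order $\Gamma$, while every component has denominator dividing $\Gamma$, so the order is exactly $\Gamma$. In the chain case $\varphi_i=(-1)^{n+i}/(a_i\cdots a_n)$ has order $a_i\cdots a_n$, and the least common multiple of these over $i$ is $a_1\cdots a_n$. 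In both cases the order of $\varphi$ matches $|\Aut(W)|$, so $\varphi$ is a generator and the group is cyclic of the asserted order.
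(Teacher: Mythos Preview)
Your argument is correct and follows the same approach the paper indicates: use the identification $\Aut(W)=A^{-1}\IZ^n/\IZ^n$, so that $|\Aut(W)|=|\det A|$ and the columns of $A^{-1}$ generate. The paper itself gives no details beyond this remark and a reference to Krawitz; you supply exactly the missing verification, namely that $A\varphi=\pm e_n$ (so $\varphi$ is, up to sign, the last column of $A^{-1}$) and that the order of $\varphi$ in $(\IQ/\IZ)^n$ already equals $|\det A|$, forcing cyclicity with the stated generator.
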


 A subgroup $G$ of $\Aut(W)$ is given by $C^{-1}\IZ^n/\IZ^n$, where $C\in {\rm M}(n,\IZ)$ is a matrix invertible over $\IQ$  such that the columns of $C^{-1}\in {\rm M}(n,\IQ)$ are spanned by the columns of $A^{-1}$, i.e. 
$C^{-1}=A^{-1}B$ for some $B\in {\rm M}(n,\IZ)$.

\begin{remark}\label{J} Let $J_W=\langle q\rangle$, where $q=(q_1,\dots,q_n)$ is the vector of charges 
and let $C_0\in {\rm M}(n,\IZ)$ be such that $J_W=C_0^{-1}\IZ^n/\IZ^n$.
In this case the columns of $C_0^{-1}$ are a basis of the lattice $L$ 
generated by the canonical basis $e_1,\dots,e_n$ and the vector $q$.
Such a basis can be obtained as follows: let $w=(w_1,\dots,w_n)$ be the vector of weights and let 
$M\in {\rm GL}(n,\IZ)$ such that $Mw=e_1$ (this is possible since $w$ is primitive), then
a basis of $L$ is given by $q,M^{-1}e_2,\dots,M^{-1}e_n$:
$$
C_0^{-1}=\left(\begin{matrix}
q & M^{-1}e_2& \dots & M^{-1}e_n
\end{matrix}\right)
$$
 \end{remark}
In what follows  $e$ will be the column vector with all entries equal to $1$.
\begin{lemma}\label{js} 
$J_W\subset  G$ if and only if $B^{-1}e\in \IZ^n$ and $G\subset {\rm SL}(W)$ if and only if $(C^T)^{-1}e\in \IZ^n.$
\end{lemma}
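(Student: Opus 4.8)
The plan is to read both conditions through the lattice description $\Aut(W)=A^{-1}\IZ^n/\IZ^n$ and $G=C^{-1}\IZ^n/\IZ^n$, using two elementary facts: first, that the charge vector is exactly the row-sum vector of $A^{-1}$, so $q=A^{-1}e$; and second, that since $C$ is integral we have $C\IZ^n\subseteq\IZ^n$, hence $\IZ^n\subseteq C^{-1}\IZ^n$. The latter means that a coset $v+\IZ^n$ lies in $G$ precisely when its representative satisfies $v\in C^{-1}\IZ^n$, which is the device that reduces both statements to membership in, or a linear condition on, the lattice $C^{-1}\IZ^n$.

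For the first equivalence I would argue as follows. Since $J_W=\langle q\rangle$ is cyclic generated by $q$, the inclusion $J_W\subset G$ holds if and only if $q\in G$, i.e. $q\in C^{-1}\IZ^n$. Substituting $q=A^{-1}e$ and $C^{-1}=A^{-1}B$, this reads $A^{-1}e\in A^{-1}B\,\IZ^n$. Applying the invertible matrix $A$, which is a bijection of $\IQ^n$, cancels the common factor $A^{-1}$ and leaves $e\in B\,\IZ^n$; writing $e=Bm$ with $m\in\IZ^n$ this is exactly $B^{-1}e\in\IZ^n$, as claimed.

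For the second equivalence I would first record the determinant criterion: an element $a=(a_1,\dots,a_n)\in(\IQ/\IZ)^n$ corresponds to $\gamma=(\exp(2\pi i a_1),\dots,\exp(2\pi i a_n))$, whose determinant is $\exp(2\pi i\,e^{T}a)$, so $\gamma\in\SL(W)$ if and only if $e^{T}a\in\IZ$ (a condition that is well defined on cosets, since $e^{T}$ maps $\IZ^n$ into $\IZ$). Hence $G\subset\SL(W)$ if and only if $e^{T}v\in\IZ$ for every $v\in C^{-1}\IZ^n$. As this condition is additive in $v$ and the columns of $C^{-1}$ generate the lattice $C^{-1}\IZ^n$, it suffices to impose it on those columns, which says that the row vector $e^{T}C^{-1}$ is integral, equivalently that its transpose $(C^{-1})^{T}e=(C^{T})^{-1}e$ lies in $\IZ^n$.

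Both arguments are pure linear algebra over $\IZ$, so I do not expect a deep obstacle; the only points requiring care are (i) justifying that membership of a coset in $G$ reduces to membership of a representative in $C^{-1}\IZ^n$, which is where the inclusion $\IZ^n\subseteq C^{-1}\IZ^n$ is used, and (ii) the bookkeeping with transposes, keeping straight whether $e$ appears as a row or a column, so that the $\SL$-condition comes out correctly as $(C^{T})^{-1}e\in\IZ^n$ rather than $C^{-1}e\in\IZ^n$.
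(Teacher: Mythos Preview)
Your proof is correct and follows essentially the same approach as the paper's: for the first equivalence you reduce $q\in C^{-1}\IZ^n$ to $B^{-1}e\in\IZ^n$ via $q=A^{-1}e$ and $C^{-1}=A^{-1}B$ (the paper writes this as $Cq=CA^{-1}e=B^{-1}e$, which is the same computation), and for the second you translate the determinant-one condition into $e^T C^{-1}u\in\IZ$ for all $u\in\IZ^n$, then transpose. Your added remarks on why coset membership in $G$ reduces to lattice membership in $C^{-1}\IZ^n$ are a nice bit of explicitness that the paper leaves implicit.
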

\begin{proof}
Recall that $J_W$ is generated by $q=A^{-1}e$. Thus $J_W\subset G$ if and only if $CA^{-1}e=B^{-1}e\in \IZ^n$. The group $G$ is contained in ${\rm SL}(W)$ if and only if $\sum_{i=1}^n a_i=a\cdot e\in \IZ$ for all $a=(a_1,\dots,a_n)\in G$. Equivalently
$$(C^{-1}u)^T e=u^T(C^T)^{-1} e\in \IZ$$
for all $u\in \IZ^n$, i.e. $(C^T)^{-1} e\in \IZ^n$.
\end{proof}

\subsection{Description of $G_W^T$}
Given a subgroup $G=C^{-1}\IZ^n/\IZ^n$ of $\Aut(W)$, where $C=A^{-1}B\in {\rm M}(n,\IZ)$,
we define the {\em transpose group} $G^T$ in $\Aut(W^T)$ as:
$$G^T:=(B^T)^{-1}\IZ^n/\IZ^n.$$
 
As a consequence of the previous description of the group $G$ we have the following properties.

\begin{pro}\label{grouprop}  
\begin{itemize}
\item[]
\item[1)]  $|G|=|\det(C)|$ and $|G^T|=|\det(B)|$,
\item[2)] $(G^T)^T=G$,
\item[3)] $\{0\}^T=\Aut(W^T)$ and $\Aut(W)^T=\{0\}$,
\item[4)] $J_W^T={\rm SL}(W^T)$,
\item[5)] if $G_1\subset G_2$, then $G_2^T\subset G_1^T$ and $G_2/G_1\cong G_1^T/G_2^T$.
\end{itemize}
 \end{pro}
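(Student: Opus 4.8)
The plan is to reduce all five statements to the single factorization identity underlying the transpose operation. Write $G=C^{-1}\IZ^n/\IZ^n$ with $C\in {\rm M}(n,\IZ)$ and $C^{-1}=A^{-1}B$, $B\in {\rm M}(n,\IZ)$; the chain $\IZ^n\subseteq C^{-1}\IZ^n\subseteq A^{-1}\IZ^n$ is then equivalent to the matrix factorization $A=BC$ over $\IZ$, and by definition $G^T=(B^T)^{-1}\IZ^n/\IZ^n$. The first thing I would record is that, since $A^T=C^TB^T$, the transpose operation is exactly the assignment $(B,C)\mapsto(C^T,B^T)$ on such factorizations (and it is independent of the choice of $C$, since replacing $C$ by $UC$ with $U\in {\rm GL}(n,\IZ)$ replaces $B$ by $BU^{-1}$ and leaves $(B^T)^{-1}\IZ^n$ unchanged). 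From this, (1) is immediate from the index formula $[C^{-1}\IZ^n:\IZ^n]=|\det C|$, giving $|G|=|\det C|$ and $|G^T|=|\det B^T|=|\det B|$; (2) holds because $(B,C)\mapsto(C^T,B^T)$ is visibly an involution; and (3) follows by taking $(B,C)=(A,I)$ for $G=\{0\}$ and $(B,C)=(I,A)$ for $G=\Aut(W)$, whose transposes are $(A^T)^{-1}\IZ^n/\IZ^n=\Aut(W^T)$ and $\IZ^n/\IZ^n=\{0\}$ respectively.

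For (5) I would use that $G_i$ corresponds to $A=B_iC_i$. The key identity is $B_2^{-1}B_1=C_2C_1^{-1}$ (both equal $B_2^{-1}AC_1^{-1}$), so the containment $G_1\subseteq G_2$, i.e. $C_2C_1^{-1}\in {\rm M}(n,\IZ)$, is equivalent to $B_2^{-1}B_1\in {\rm M}(n,\IZ)$, i.e. to $G_2^T\subseteq G_1^T$; this gives the inclusion-reversal. For the isomorphism, put $N:=C_2C_1^{-1}=B_2^{-1}B_1\in {\rm M}(n,\IZ)$. Multiplying $C_1^{-1}\IZ^n\subseteq C_2^{-1}\IZ^n$ on the left by $C_2$ yields $G_2/G_1\cong\IZ^n/N\IZ^n=\operatorname{coker}(N)$, while multiplying $(B_2^T)^{-1}\IZ^n\subseteq(B_1^T)^{-1}\IZ^n$ by $B_1^T$ yields $G_1^T/G_2^T\cong\IZ^n/N^T\IZ^n=\operatorname{coker}(N^T)$. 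Since a matrix and its transpose share a Smith normal form, $\operatorname{coker}(N)\cong\operatorname{coker}(N^T)$, and the isomorphism $G_2/G_1\cong G_1^T/G_2^T$ follows.

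The real work is (4). I would take $J_W=C_0^{-1}\IZ^n/\IZ^n$ as in Remark \ref{J}, so $C_0^{-1}\IZ^n$ is the lattice spanned by $e_1,\dots,e_n$ and $q=A^{-1}e$, and $|\det C_0|=|J_W|=d$ by (1); the matrix $B_0=AC_0^{-1}$ then gives $|J_W^T|=|\det B_0|=|\det A|/d$. To show $J_W^T\subseteq\SL(W^T)$ I would feed the transposed data into Lemma \ref{js}: for $H=G^T$, whose matrices relative to $W^T$ are $B^T$ (in the role of ``$C$'') and $C^T$ (in the role of ``$B$''), the criterion $H\subseteq\SL(W^T)$ becomes $B^{-1}e\in\IZ^n$. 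For $G=J_W$ this is $B_0^{-1}e=C_0A^{-1}e=C_0q\in\IZ^n$, which holds because $q\in C_0^{-1}\IZ^n$. Equality then follows by comparing cardinalities: $|\SL(W^T)|=|\Aut(W^T)|/d=|\det A|/d$, because the character $a\mapsto\sum_i a_i$ on $\Aut(W^T)=(A^T)^{-1}\IZ^n/\IZ^n$ sends $a=(A^T)^{-1}v$ to $q\cdot v \bmod\IZ$, so its image is the cyclic subgroup of $\IQ/\IZ$ generated by the charges $q_1,\dots,q_n$, of order equal to their common denominator $d$.

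The step I expect to be the genuine obstacle is precisely this last order computation in (4): one must notice that the sum-of-entries character on $\Aut(W^T)$ is controlled by the charges of $W$ rather than those of $W^T$, so that its image has order $d=|J_W|$ and matches $|J_W^T|$. Once this is in place, equality in (4) is forced by the inclusion already established. Everything else is formal manipulation of the factorization $A=BC$ together with the Smith normal form; the arithmetic content of the proposition is concentrated in reconciling $J_W$ on the $W$-side with $\SL(W^T)$ on the $W^T$-side through this character.
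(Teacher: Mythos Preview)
Your argument is correct. For items (1)--(3), (5), and the inclusion $J_W^T\subset\SL(W^T)$ in (4), you follow essentially the paper's approach (the paper simply asserts that (1)--(3) and (5) ``follow easily from the definition'' and then proves the inclusion in (4) via Lemma~\ref{js} and $C_0q\in\IZ^n$, exactly as you do). Your treatment of (5) via the identity $B_2^{-1}B_1=C_2C_1^{-1}$ and Smith normal form is a clean explicit version of what the paper leaves implicit.

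The genuine difference is in establishing \emph{equality} in (4). The paper proves the reverse inclusion $\SL(W^T)\subset J_W^T$ element by element: given $a=(A^T)^{-1}v$ with $a\cdot e\in\IZ$, it rewrites this as $v^TC_0^{-1}e_1\in\IZ$, and then invokes the explicit shape of $C_0^{-1}$ from Remark~\ref{J}---namely that all its columns except the first are already integral---to conclude $(C_0^T)^{-1}v\in\IZ^n$, i.e.\ $a\in J_W^T$. You instead compare cardinalities: the sum-of-entries character on $\Aut(W^T)$ sends $a=(A^T)^{-1}v$ to $q\cdot v$, so its image is $\langle q_1,\dots,q_n\rangle\subset\IQ/\IZ$, which has order equal to the least common denominator $d$ of the charges; hence $|\SL(W^T)|=|\det A|/d=|\det B_0|=|J_W^T|$, and equality follows from the inclusion already shown. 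Your route is more conceptual and does not rely on the special basis of Remark~\ref{J}; the paper's route is more hands-on but makes the role of that basis transparent, which it reuses immediately afterward in Corollary~\ref{sl}.
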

 \begin{proof} We will prove statement 4), the remaining ones follow easily from the definition.
Let $C_0^{-1}$ be a matrix corresponding to $J_W$ as in Remark \ref{J} and let $B_0=AC_0^{-1}$.
By Lemma \ref{js}, $J_W^T$ is contained in $\SL(W^T)$ if $B_0^{-1}e\in \IZ^n$.
Equivalently:
$$C_0A^{-1}e=C_0q\in \IZ^n,$$
which clearly holds since $q=C_0^{-1}e_1$.

Conversely, let $a=(A^T)^{-1}v=(B_0^T)^{-1}(C_0^T)^{-1}v\in \Aut(W^T)$, $v\in \IZ^n$, such that $a\cdot e\in \IZ$.
We show that $(C_0^T)^{-1}v\in \IZ^n$.
The condition $a\cdot e\in \IZ$ is equivalent to:
$$a^TAq=a^TAC_0^{-1}e_1=v^TC_0^{-1}e_1\in \IZ,$$
i.e. $(C_0^T)^{-1}v\cdot e_1\in \IZ^n$. 
Since the columns of $C_0^{-1}$, except for the first one, have integral entries, 
this is enough to prove that $(C_0^T)^{-1}v\in \IZ^n$. Thus $a\in J_W^T$.
\end{proof}

\begin{remark}
By  Proposition \ref{grouprop} it follows that  
$J_W\subset G$ if and only if $G^T\subset\SL(W^T)$ and 
$J_{W^T}\subset G^T$ if and only if $G\subset \SL(W)$.
Moreover, $\SL(W^T)/J_{W^T}$ is isomorphic to
 $\SL(W)/J_W$,
so that $\SL(W^T)=J_{W^T}$ if and only if $J_W=\SL(W)$.
\end{remark}

 \subsection{The group $\SL(W)$}
 We will now determine the order of the subgroup 
 $\SL(W)=\Aut(W)\cap \SL_n(\IC)$.
  \begin{cor}\label{sl}
 The order of $\SL(W)$ is equal to $|\det(A)|/d^T$, where $d^T$ is the least common denominator of the charges of $W^T$.
 \end{cor}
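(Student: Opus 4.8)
The plan is to deduce the order of $\SL(W)$ purely from the formal properties of the transpose operation collected in Proposition \ref{grouprop}, exploiting the involutivity $(W^T)^T=W$. The key observation is that every statement in that proposition may be applied with $W$ replaced by $W^T$, since $(A^T)^T=A$ and $(G^T)^T=G$ (part 2). In particular, part 4 applied to $W^T$ gives $J_{W^T}^T=\SL((W^T)^T)=\SL(W)$, while part 3 applied to $W^T$ gives $\Aut(W^T)^T=\{0\}$.

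Next I would apply part 5 to the inclusion $J_{W^T}\subset\Aut(W^T)$ inside $\Aut(W^T)$. Using the two identifications just made, this yields
$$
\Aut(W^T)/J_{W^T}\;\cong\;J_{W^T}^T/\Aut(W^T)^T\;=\;\SL(W)/\{0\}\;=\;\SL(W).
$$
Taking cardinalities, and recalling that $|\Aut(W^T)|=|\det(A^T)|=|\det(A)|$ while $J_{W^T}$ is cyclic of order $d^T$ (being the analogue for $W^T$ of the group $J_W$, which is cyclic of order $d$, generated by $q^T=(A^T)^{-1}e$), I obtain
$$
|\SL(W)|=\frac{|\Aut(W^T)|}{|J_{W^T}|}=\frac{|\det(A)|}{d^T},
$$
as claimed.

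Alternatively one can argue directly from part 1, which I expect to give the quickest route: writing $J_{W^T}=C^{-1}\IZ^n/\IZ^n$ with $C^{-1}=(A^T)^{-1}B$ for some integral matrix $B$, part 1 gives $|J_{W^T}|=|\det(C)|=|\det(A)|/|\det(B)|$ and $|J_{W^T}^T|=|\det(B)|$, whence $|\SL(W)|\cdot|J_{W^T}|=|J_{W^T}^T|\cdot|J_{W^T}|=|\det(A)|$. I anticipate no serious obstacle here; the only points requiring care are the bookkeeping of the transpose symmetry—making sure that parts 4 and 5 are legitimately invoked for $W^T$ rather than for $W$—and the identity $|J_{W^T}|=d^T$, which follows from the explicit cyclic generator $q^T=(A^T)^{-1}e$ exactly as the order $d$ of $J_W$ was obtained.
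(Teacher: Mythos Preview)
Your proof is correct. Both your argument and the paper's hinge on Proposition~\ref{grouprop} together with the symmetry $W\leftrightarrow W^T$, so the overall strategy is the same. The execution differs slightly: the paper first computes $|\SL(W^T)|$ by writing $J_W=C_0^{-1}\IZ^n/\IZ^n$ with the explicit matrix $C_0$ of Remark~\ref{J}, then uses part~1 to get $|\SL(W^T)|=|J_W^T|=|\det(B_0)|$ and evaluates $\det(B_0)=\det(A)\det(C_0^{-1})$ via a direct matrix calculation showing $|\det(C_0^{-1})|=1/d$; only at the end does it swap $W$ and $W^T$. Your main argument bypasses this explicit determinant computation by invoking part~5 on the pair $J_{W^T}\subset\Aut(W^T)$ to obtain the isomorphism $\Aut(W^T)/J_{W^T}\cong\SL(W)$ in one stroke. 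This is a bit cleaner; the paper's route, on the other hand, makes the dependence on the concrete matrix description of $J_W$ explicit. Your alternative argument at the end (computing $|\det B|$ via part~1) is essentially the paper's approach with the roles of $W$ and $W^T$ already swapped.
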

 \begin{proof}
 By Proposition \ref{grouprop}, $\SL(W^T)=J_{W}^T$ and
 $|\SL(W^T)|=|\det(B_0)|$ where $B_0=AC_0^{-1}$ and $C_0$ is given in Remark \ref{J}.
 Observe that  
 $$MC_0^{-1}= \left(\begin{matrix}
e_1/d & e_2& \dots & e_n
\end{matrix}\right)$$
since $Mq=M(w/d)=e_1/d$. 
Thus 
$$|\SL(W^T)|=|\det(A)\det(C_0^{-1})|=|\det(A)\det(MC_0^{-1})|=\frac{|\det(A)|}{d}.$$
Changing $W$ with $W^T$ we get the statement.
 \end{proof}

\begin{pro}\label{sl4} Let $W:\IC^{4}\to \IC$ be a well-formed potential 
of the form
$$W(x,y,z,w)=x^2-f(y,z,w)$$
and let $A=(a_{ij})_{i,j=1,2,3}$ be the matrix associated to $f$.
\begin{itemize}\itemsep 0.25cm
\item If $f$ is of chain type, then 
$|\SL(W)|=2\gcd(a_1a_2a_3,1-a_1+a_1a_2).$
\item If $f$ is of loop type, then 
$|\SL(W)|=2\gcd(1+a_1a_2a_3,1-a_1+a_1a_2).$
\item If $f$ is of fermat type, then $|\SL(W)|=\frac{2a_1a_2a_3}{\lcm(a_1,a_2,a_3)}$.
\item If $f$ is of chain+fermat type, then $|\SL(W)|=2a_3\gcd(a_1a_2,a_1-1)$ if 
$a_3$ is odd and $|\SL(W)|=a_3\gcd(a_1a_2,a_1-1)$ otherwise.
\item If $f$ is of loop+fermat type, then $|\SL(W)|=2a_3\gcd(a_1a_2-1,a_2-1)$ 
if $a_3$ is odd and $|\SL(W)|=a_3\gcd(a_1a_2-1,a_2-1)$ otherwise.
\end{itemize}
\end{pro}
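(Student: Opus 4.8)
The plan is to compute $|\SL(W)|$ by applying Corollary~\ref{sl}, which reduces the problem to evaluating two quantities: $|\det(A)|$, the order of $\Aut(W)$ for the full four-variable potential $W(x,y,z,w)=x^2-f(y,z,w)$, and $d^T$, the least common denominator of the charges of the transposed potential $W^T$. Since the Fermat factor $x^2$ contributes independently, the matrix of exponents of $W$ is block-diagonal with a $1\times 1$ block $(2)$ and the $3\times 3$ block $A$ associated to $f$, so $|\det A_{\text{full}}|=2|\det A|$. The values of $|\det A|$ for the three atomic types are read off directly from Proposition~\ref{order}: $a_1a_2a_3$ in the Fermat case, $a_1a_2a_3+(-1)^{4}=a_1a_2a_3-1$ in the loop case (here $n=3$, so the sign is $(-1)^{n+1}=+1$, giving $a_1a_2a_3+1$), and $a_1a_2a_3$ in the chain case. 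For the mixed types (chain+Fermat, loop+Fermat) one takes $f$ to be a sum of a two-variable atomic block and a one-variable Fermat block $z^{a_3}$, and $|\det A|$ multiplies accordingly.

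The harder half is computing $d^T$, the least common denominator of the charges $q_j^T$ of $W^T$. First I would write down $W^T$ explicitly for each atomic type: transposing interchanges chain and the reversed chain, fixes loops up to relabeling, and fixes Fermat monomials. Then I would compute the charge vector $q^T=(A^T)^{-1}e=(A^{-1})^Te$ — i.e. $q^T_j$ is the sum of the $j$-th column of $A^{-1}$ — and determine the common denominator of its entries after clearing. For the pure Fermat potential $x^2+y^{a_1}+z^{a_2}+w^{a_3}$ the transpose is itself, the charges are $\tfrac12,\tfrac1{a_1},\tfrac1{a_2},\tfrac1{a_3}$, and $d^T=\lcm(2,a_1,a_2,a_3)$; combined with $|\det A_{\text{full}}|=2a_1a_2a_3$ this yields the stated formula $\tfrac{2a_1a_2a_3}{\lcm(a_1,a_2,a_3)}$ (the factor of $2$ in the lcm cancels against the Fermat contribution, which explains the absence of $2$ in the denominator). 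For the loop and chain cases the charges are rational functions of the $a_i$ given by Proposition~\ref{order} applied to $W^T$, and clearing denominators produces the gcd expressions $\gcd(a_1a_2a_3,1-a_1+a_1a_2)$ and $\gcd(1+a_1a_2a_3,1-a_1+a_1a_2)$ seen in the statement.

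The main obstacle, and where the bookkeeping concentrates, is the parity analysis in the mixed chain+Fermat and loop+Fermat cases, where the factor of $2$ coming from the $x^2$ block interacts with the denominator of the Fermat charge $\tfrac1{a_3}$. Specifically, $d^T=\lcm(2, a_3, D)$ where $D$ is the denominator contributed by the chain or loop block of $W^T$; whether the prime $2$ is already absorbed by $a_3$ (when $a_3$ is even) or must be supplied separately (when $a_3$ is odd) is exactly what produces the two cases in the final two bullets. I would therefore carry out the computation as follows: (i) invoke Corollary~\ref{sl} and record $|\det A_{\text{full}}|=2|\det A|$; (ii) tabulate $|\det A|$ for each type from Proposition~\ref{order}; (iii) compute the column sums of $A^{-1}$ to get $q^T$ and hence $d^T$ in each pure type; (iv) handle the mixed types by combining the two blocks' charge denominators and tracking the factor of $2$ through $\lcm(2,a_3,\cdot)$, splitting on the parity of $a_3$; (v) in each case simplify $|\det A_{\text{full}}|/d^T$, using the elementary identity relating the product $|\det A|$ to the least common denominator of the charges (namely $d\cdot d^T$ divides a predictable multiple of $|\det A|$) to recognize the quotient as the claimed gcd. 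The loop-type sign $(-1)^{n+1}$ with $n=3$ must be tracked carefully, since it is the source of the $+1$ versus $-1$ discrepancy between the loop and chain formulas.
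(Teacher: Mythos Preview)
Your overall strategy---invoke Corollary~\ref{sl} so that $|\SL(W)|=|\det A_{\mathrm{full}}|/d^T$, read off $|\det A_{\mathrm{full}}|=2|\det A|$ from Proposition~\ref{order}, then determine $d^T$---is exactly how the paper begins. The divergence, and the gap in your plan, is in how you propose to compute $d^T$.

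You want to find $d^T$ as the least common multiple of the denominators of the charges $q_j^T$ and then ``simplify'' to the claimed gcd. But this simplification is not automatic: it requires the well-formedness hypothesis in an essential way, and you never invoke it. Concretely, in the Fermat case your own computation gives $d^T=\lcm(2,a_1,a_2,a_3)$, which matches the stated formula $2a_1a_2a_3/\lcm(a_1,a_2,a_3)$ only if some $a_i$ is even; that is forced by well-formedness (if all $a_i$ were odd then $w_2,w_3,w_4$ would all be even), not by any cancellation intrinsic to the Fermat block. In the chain and loop cases the issue is sharper: writing $d^T$ as an lcm of four reduced denominators gives an expression that does not collapse to $a_1a_2a_3/\gcd(a_1a_2a_3,1-a_1+a_1a_2)$ without knowing that $\gcd(d^T,w_4^T)=1$---and that coprimality is precisely what well-formedness of $W^T$ supplies. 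In the mixed cases you do track the parity of $a_3$, but the parity alone is not enough: you also need, for instance, that $a_3$ divides the chain-block denominator $D$ when $a_3$ is odd, and this again comes from well-formedness, not from the block structure.

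The paper's argument sidesteps the lcm computation entirely. Instead of computing all four charges, it uses a single equation from the linear system $A^T w^T=d^T e$: for example in the chain case one reads off $\frac{2a_1a_2a_3}{d^T}\,w_4^T=2(1-a_1+a_1a_2)$, and then well-formedness of $W^T$ forces $\gcd(d^T,w_4^T)=1$, so $|\SL(W)|=2a_1a_2a_3/d^T=\gcd(2a_1a_2a_3,\,2(1-a_1+a_1a_2))$. In the mixed cases the same idea gives $\gcd(d^T,w_3^T)\in\{1,2\}$, and well-formedness decides which, according to the parity of $a_3$. Your step (v), with its unspecified ``elementary identity relating $d\cdot d^T$ to $|\det A|$'', does not substitute for this: the identity you need is exactly the coprimality of $d^T$ with a particular weight, and that is a consequence of the hypothesis, not a formal fact about the matrix.
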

\begin{proof}
In all cases we will apply Corollary \ref{sl}. We denote by 
$A$ the matrix associated to the potential $W$, by 
$w^T=(w_1^T,w_2^T,w_3^T,w_4^T)$ the weight vector of $W^T$ and 
by $d^T$ the degree of the hypersurface $W^T$ in $\IP(w^T)$.

If $f$ is of chain type, then $\det(A)=2a_1a_2a_3$. 
From the linear system 
\begin{equation}\label{e}
A^T\cdot w^T=d^Te
\end{equation} 
we obtain that
$$\frac{2a_1a_2a_3}{d^T}w^T_4=2(1-a_1+a_1a_2).$$
Let $m:=\gcd(d^T,w_4^T)$.
Observe that $m$ divides $w_2^T,w_3^T,w_4^T$ by (\ref{e}). 
Since $W$ is well-formed, this implies that $m=1$, so that
 $|\SL(W)|=\frac{\det(A)}{d^T}=2\gcd(a_1a_2a_3,1-a_1+a_1a_2)$.
The case when $f$ is of loop type is similar to the previous one.

If $f$ is of chain+fermat type, then looking at the equation of the linear system (\ref{e}) 
coming from the chain part we obtain that 
$$\frac{a_1a_2}{d^T}w^T_3=a_1-1.$$
Let $m:=\gcd(d^T,w_3^T)$. Observe that $m$ divides $w_2^T, w_3^T$,
$2w_1^T$ and $a_3w_4^T$. 
Thus, since $W$ is well-formed, $m$ is either $1$ or $2$.
Moreover, again since $W$ is well-formed, 
$m=2$ if and only if $a_3$ is even. 
If $m=1$, then $|\SL(W)|=2a_3\gcd(a_1a_2,a_1-1)$,
otherwise $|\SL(W)|=a_3\gcd(a_1a_2,a_1-1)$.
The case when $f$ is of loop+fermat type is similar.
\end{proof}

\begin{remark} The formulas given in Proposition \ref{sl4} for the chain, 
loop and fermat case can be easily generalized to 
the case of a higher dimensional well-formed potential of type $x^2=f(x_1,\dots,x_n)$.
Let $\Theta=1+\sum_{j=1}^{n-1}(-1)^{j}a_1\cdots a_j.$
In the chain case  $|\SL(W)|=2\gcd(a_1\cdots a_n,\Theta)$, in the loop case  $|\SL(W)|=2\gcd((-1)^{n-1}+a_1\cdots a_n,\Theta)$ and finally in the fermat case $|\SL(W)|=\frac{2a_1\cdots a_n}{\lcm(a_1,\dots,a_n)}$.
\end{remark}

 \subsection{Relation with Borisov's description}
In this subsection we relate the definition of transpose group with the one given in \cite{borisov}.
Let $M_0^*=N_0=\IZ^n$ and $\xi:N_0\to M_0^*,\ u\mapsto Av$. We have an exact sequence
$$0\to N_0\stackrel{\xi}{\to} M_0^* \stackrel{f}{\to}  \Aut(W)\to 0,$$
where $f(e_i)=A^{-1}e_i$. Thus $f$ induces an isomorphism $\Aut(W)\cong M_0^*/\xi(N_0)=\IZ^n/A\IZ^n$. 
The dual of $\xi$ gives the exact sequence 
$$0\to M_0\stackrel{\xi^*}{\to} N_0^*\stackrel{f^T}{\to} \Aut(W^T)\to 0,$$
where $f^T(e_i)=(A^{-1})^Te_i$,
giving isomorphisms 
$$\Aut(W^T)\cong \Ext^1(\Aut(W),\IZ)\cong N_0^*/\xi^*(M_0)=\IZ^n/A^T\IZ^n.$$ 

Let $G$ be a subgroup of $\Aut(W)$. Then there is a submodule $N=B\IZ^n$ of $M_0^*$ containing $\xi(N_0)$ such that $G\cong N/\xi(N_0)\cong B\IZ^n/A\IZ^n$. 
Observe that we can write $A=BC$, where $B,C$ are integral matrices invertible over $\IQ$. 
Consider the chain of inclusions
$$\xi(N_0)=A\IZ^n\hookrightarrow N=B\IZ^n\hookrightarrow \IZ^n=M_0^*,$$
and its dual 
$$M_0={\rm Hom}(\IZ,\IZ^n) \rightarrow N^*={\rm Hom}(B\IZ^n,\IZ)\rightarrow \xi(N_0)^*={\rm Hom}(A\IZ^n,\IZ).$$
We identify ${\rm Hom}(A\IZ^n,\IZ)$ with $\IZ^n$ via the homomorphism 
given by the dual of $\xi:N_0\to \xi(N_0)$:
$$h:{\rm Hom}(A\IZ^n,\IZ)\to \IZ^n,\  (A^T)^{-1}e_i\mapsto e_i.$$
Thus $h (N^*)=C^T\IZ^n$ and $h(M_0)=A^T\IZ^n$. 
According to Borisov's  definition
$$G^T:=N^*/M_0\stackrel{h}{\cong} C^T\IZ^n/A^T\IZ^n\stackrel{f^T}{\cong} (B^T)^{-1}\IZ^n/\IZ^n,$$
which agrees with the definition given in the first section.

\subsection{$G$ and $G^T$ as orthogonal groups}
Consider the bilinear form $b:\IQ^n\times \IQ^n\to \IQ$ given by $b(u,v)=u^TAv$.
Observe that this induces a bilinear form
$$\bar b: \Aut(W^T)\times \Aut(W) \to \IQ/\IZ,$$
where we recall that $\Aut(W^T)=(A^T)^{-1}\IZ^n/\IZ^n$ and $\Aut(W)=A^{-1}\IZ^n/\IZ^n$.
In fact $\bar b$ is well defined since, if $u,v\in \IZ^n$, then 
$$b(u,A^{-1}v)=u^TAA^{-1}v=u^Tv\in \IZ,\quad b((A^T)^{-1}u,v)=u^TA^{-1}Av=u^Tv\in \IZ.$$
Let $G=C^{-1}\IZ^n/\IZ^n$ be a subgroup of $\Aut(W)$.  We show that $G^T$ is the orthogonal  of $G$ with respect to $\bar b$. 
\begin{lemma}\label{ortog}
$$G^T=\{x\in \Aut(W^T): \bar b(x,y)=0,\ \forall y\in G\}.$$
\end{lemma}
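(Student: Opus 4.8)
The plan is to show the two sets are equal by unwinding the definitions on both sides in terms of integer lattices, so that the orthogonality condition becomes an explicit congruence.

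First I would recall the explicit descriptions. We have $G = C^{-1}\IZ^n/\IZ^n$ with $C = A^{-1}B$ for some integral $B$, and by definition $G^T = (B^T)^{-1}\IZ^n/\IZ^n \subset \Aut(W^T) = (A^T)^{-1}\IZ^n/\IZ^n$. A general element $y \in G$ can be written $y = C^{-1}v \bmod \IZ^n$ for $v \in \IZ^n$, and a general element $x \in \Aut(W^T)$ as $x = (A^T)^{-1}u \bmod \IZ^n$ for $u \in \IZ^n$. The form $\bar b$ sends such a pair to $b(u, C^{-1}v) = u^T A C^{-1} v \bmod \IZ$. Since $C = A^{-1}B$, we have $AC^{-1} = A B^{-1} A = $ — more directly, $C^{-1} = B^{-1}A$, so $A C^{-1} = A B^{-1} A$; it is cleaner to compute $b(u, y) = u^T A C^{-1} v = u^T A (A^{-1}B) v$ — wait, here $y = C^{-1}v$ with $C^{-1} = A^{-1}B$, giving $b(u,y) = u^T A A^{-1} B v = u^T B v \bmod \IZ$. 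This is the key simplification: the pairing of $x = (A^T)^{-1}u$ with $y = C^{-1}v$ reduces to $u^T B v \bmod \IZ$.

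Next I would characterize the orthogonal complement. The condition $\bar b(x,y) = 0$ for all $y \in G$ means $u^T B v \in \IZ$ for all $v \in \IZ^n$, which (since $v$ ranges over all integer vectors) is equivalent to $B^T u \in \IZ^n$ — automatic, as $B$ is integral — so I must instead read the condition as ranging over the representatives defining $G$, namely $y = C^{-1}v \bmod \IZ^n$. The honest computation is $\bar b(x,y) = u^T A C^{-1} v \bmod \IZ$ with $C^{-1} = A^{-1}B$, giving $u^T B v \bmod \IZ$; requiring this to vanish for all $v \in \IZ^n$ forces $u^T B \in \IZ^n$, i.e. $B^T u \in \IZ^n$, which holds for every integral $u$ and hence gives no constraint — signalling that the correct statement pairs $x$ against the \emph{full} subgroup, so the representative $u$ must be taken modulo the lattice making $x$ well defined. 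The clean path is: $x \in G^{\perp}$ iff $u \in (B^T)^{-1}\IZ^n$ modulo $A^T \IZ^n$, which is exactly $G^T = (B^T)^{-1}\IZ^n/\IZ^n$ after identifying $\Aut(W^T) = (A^T)^{-1}\IZ^n/\IZ^n$ via $u \mapsto (A^T)^{-1}u$.

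Concretely, the two inclusions go as follows. For $G^T \subset G^{\perp}$, take $x = (B^T)^{-1}w \bmod \IZ^n$ with $w \in \IZ^n$; writing it as $(A^T)^{-1}u$ forces $u = A^T(B^T)^{-1}w = (B^{-1}A)^T w = (C^{-1})^T w$, and then $\bar b(x,y) = u^T B v = w^T C^{-1} B v = w^T A^{-1} B B v$ — the computation should collapse to $w^T v \in \IZ$, giving orthogonality. For the reverse, I would argue that $\bar b$ is a perfect pairing between $\Aut(W^T)$ and $\Aut(W)$ (both finite of order $|\det A|$, with $\bar b$ nondegenerate because $A$ is invertible over $\IQ$), so that $|G^{\perp}| = |\Aut(W)|/|G| = |\det A|/|\det C| = |\det B| = |G^T|$ by Proposition \ref{grouprop}; combined with the already-established inclusion $G^T \subset G^{\perp}$ and equality of orders, this forces $G^T = G^{\perp}$.

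The main obstacle will be the bookkeeping of representatives: showing that $\bar b$ is well defined and nondegenerate, and correctly tracking which integer lattice a given element's representative lives in, so that the pairing $u^T B v \bmod \IZ$ really does cut out $(B^T)^{-1}\IZ^n/\IZ^n$ rather than collapsing to a trivial condition. Once the perfect-pairing property is in hand, the counting argument via Proposition \ref{grouprop} gives equality with minimal extra work, so I would lean on the order count rather than verifying the reverse inclusion by hand.
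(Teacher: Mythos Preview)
Your computation of $\bar b(x,y)$ is wrong, and this is the source of all the confusion that follows. You write the element $x \in \Aut(W^T)$ as $x = (A^T)^{-1}u$ for $u \in \IZ^n$, but then you evaluate the form as $b(u, C^{-1}v) = u^T A C^{-1}v$. That is $b$ applied to $u$, not to $x$. The correct value is
\[
\bar b(x,y) \;=\; b\bigl((A^T)^{-1}u,\, C^{-1}v\bigr) \;=\; \bigl((A^T)^{-1}u\bigr)^T A\, C^{-1}v \;=\; u^T A^{-1} A\, C^{-1}v \;=\; u^T C^{-1}v \pmod{\IZ}.
\]
So the pairing reduces to $u^T C^{-1}v$, not to $u^T B v$; the latter is indeed trivially integral, which is exactly why your computation collapsed to no condition. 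You also drift between the conventions $C = A^{-1}B$ and $C^{-1} = A^{-1}B$; the paper's convention is $C^{-1} = A^{-1}B$ (equivalently $A = BC$), and you should fix that once and for all.

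With the correct formula the proof is a one-line equivalence and no counting is needed: $\bar b(x,y)=0$ for all $y\in G$ means $u^T C^{-1}v\in\IZ$ for all $v\in\IZ^n$, i.e.\ $(C^T)^{-1}u\in\IZ^n$, i.e.\ $u\in C^T\IZ^n$. Using $A=BC$ (so $(A^T)^{-1}C^T=(B^T)^{-1}$) this says exactly $x=(A^T)^{-1}u\in (B^T)^{-1}\IZ^n/\IZ^n=G^T$. That is the paper's proof. Your order-counting route via $|G^\perp|=|\det A|/|\det C|=|\det B|=|G^T|$ would also work once you have \emph{one} inclusion and the nondegeneracy of $\bar b$, but both of those still require the correct pairing formula above, so there is no saving to be had by going that way.
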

\begin{proof} Let $u\in \IZ^n$ and $x=(A^T)^{-1}u\in \Aut(W^T)$.
We have that
$$\bar b(x,C^{-1}v)=u^TA^{-1}AC^{-1}v=u^TC^{-1}v=0$$
for all $v\in \IZ^n$, if and only if $(C^T)^{-1}u\in \IZ^n$, i.e. $(A^T)^{-1}u\in (A^T)^{-1}C^T\IZ^n=(B^T)^{-1}\IZ^n$, which means that $x\in G^T$.
\end{proof}
This remark relates our definition of transpose group with the one given in \cite{ebelingzade}.
\subsection{Relation with Krawitz's description}
According to Krawitz's definition in \cite{krawitz} the transpose group is
$$G^T=\left\{\prod_{j=1}^n(\rho_j^T)^{m_j}: \prod_{j=1}^nx_j^{m_j} \mbox{ is } G\mbox{-invariant}\right\}.$$
Observe that $\prod_{j=1}^n(\rho_j^T)^{m_j}$ corresponds, in $\Aut(W^T)$, to $\sum_jm_j(A^T)^{-1}e_j=(A^T)^{-1}m$, where $m=(m_1,\dots,m_n)\in \IZ^n$.
Moreover, $\prod_{j=1}^nx_j^{m_j}$ is $G$-invariant if and only if $\sum_j m_ja_j\in \IZ$ for all $a=(a_1,\dots,a_n)\in G$, i.e.
$$\sum_j m_ja_j=(m^TA^{-1})Aa=\bar b((A^T)^{-1}m,a)=0.$$ 
Thus $G^T$ is the orthogonal complement of $G$ with respect to $\bar b$, in agreement with Lemma \ref{ortog}.

\section{The Dolgachev-Voisin mirror symmetry for K3 surfaces}
\subsection{K3 surfaces with non-symplectic involutions}
We briefly recall the classification theorem for non-symplectic involutions on K3 surfaces given by Nikulin in \cite[\S 4]{nikulinfactor} and \cite[\S 4]{nikulinlob}. 
Let $X$ be a K3 surface and $\iota$ be non-symplectic involution of $X$.
The local action of $\iota$ at a fixed point is of type:
$$\left(\begin{array}{cc}
1 &0\\
0& -1
\end{array}
\right),$$
so that the fixed locus $X^{\iota}$ is the disjoint union of smooth curves and there are no isolated fixed points.
The invariant lattice: 
$$
H^2(X,\IZ)^{+}:=\{x\in H^2(X,\IZ)\,|\,\iota^* x=x\}
$$ 
is $2$-elementary, i.e the discriminant group $(H^2(X,\IZ)^+)^\vee/H^2(X,\IZ)^+$ is isomorphic to $(\IZ/2\IZ)^{\oplus a}$ for some non negative integer $a$. According to Rudakov-Shafarevich in \cite{RS}, the isometry class of such lattice is determined by the invariants $r, a$ and $\delta$, where $r=\rank H^2(X,\IZ)^{+}$ and $\delta\in \{0,1\}$ is $0$ if and only if $x^2\in\IZ$ for any $x\in (H^2(X,\IZ)^{+})^\vee$.   Equivalently, by \cite[\S 4]{nikulinfactor}, $\delta=0$ if and only if 
the class of the fixed locus of $\iota$ is divisible by two in $H^2(X,\IZ)$.
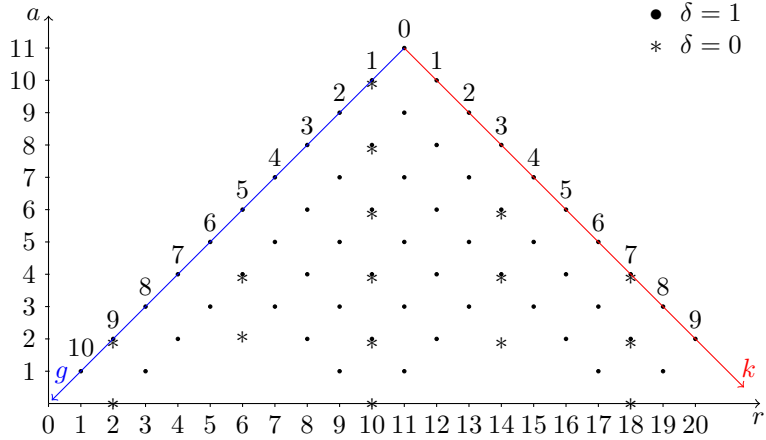
\begin{figure}[h]
$$\begin{array}{cccccccccccccccccccccccr}
 &\ &\ &&&& &&\ &\ &&&&&&&&&&&&& \bullet\ \ \delta=1\\
&\ &\ &&&& &&\ &\ &&&&&&&&&&&&&\ast\ \ \delta=0
\end{array}$$
\vspace{-1.1cm}

\begin{tikzpicture}[scale=.43]
\filldraw [black] 
(1,1) circle (1.5pt)  node[below=-0.55cm]{10}
(2,0) node[below=-0.20cm]{*} 
(2,2) circle (1.5pt) node[below=-0.5cm]{9} node[below=-0.15cm]{*}
 (3,1) circle (1.5pt)
 (3,3) circle (1.5pt)node[below=-0.5cm]{8}
 (4,2) circle (1.5pt)
(4,4) circle (1.5pt)node[below=-0.5cm]{7}
(5,3) circle (1.5pt)
(5,5) circle (1.5pt)node[below=-0.5cm]{6}
(6,4) circle (1.5pt)node[below=-0.15cm]{*}
(6,2)    node[below=-0.23cm]{*}
(6,6) circle (1.5pt)node[below=-0.5cm]{5}
(7,3) circle (1.5pt)
(7,5) circle (1.5pt)
(7,7) circle (1.5pt)node[below=-0.5cm]{4}
(8,2) circle (1.5pt)
(8,4) circle (1.5pt)
(8,6) circle (1.5pt)
(8,8) circle (1.5pt)node[below=-0.5cm]{3}
(9,1) circle (1.5pt)
(9,3) circle (1.5pt)
(9,5) circle (1.5pt)
(9,7) circle (1.5pt)
(9,9) circle (1.5pt)node[below=-0.5cm]{2}
(10,0)  node[below=-0.20cm]{*}
(10,2) circle (1.5pt)node[below=-0.15cm]{*}
(10,4) circle (1.5pt)node[below=-0.15cm]{*}
(10,6) circle (1.5pt)node[below=-0.15cm]{*}
(10,8) circle (1.5pt)node[below=-0.15cm]{*}
(10,10) circle (1.5pt) node[below=-0.5cm]{1}node[below=-0.15cm]{*}
(11,1) circle (1.5pt)
(11,3) circle (1.5pt)
(11,5) circle (1.5pt)
(11,7) circle (1.5pt)
(11,9) circle (1.5pt)
(11,11) circle (1.5pt) node[below=-0.5cm]{0}
(12,2) circle (1.5pt)
(12,4) circle (1.5pt)
(12,6) circle (1.5pt)
(12,8) circle (1.5pt)
(12,10) circle (1.5pt) node[below=-0.5cm]{1}
(13,3) circle (1.5pt)
(13,5) circle (1.5pt)
(13,7) circle (1.5pt)
(13,9) circle (1.5pt) node[below=-0.5cm]{2}
(14,2)  node[below=-0.15cm]{*}
(14,4) circle (1.5pt)node[below=-0.15cm]{*}
(14,6) circle (1.5pt)node[below=-0.15cm]{*}
(14,8) circle (1.5pt) node[below=-0.5cm]{3}
(15,3) circle (1.5pt)
(15,5) circle (1.5pt)
(15,7) circle (1.5pt) node[below=-0.5cm]{4}
(16,2) circle (1.5pt)
(16,4) circle (1.5pt)
(16,6) circle (1.5pt) node[below=-0.5cm]{5}
(17,1) circle (1.5pt)
(17,3) circle (1.5pt)
(17,5) circle (1.5pt)node[below=-0.5cm]{6}
(18,0)  node[below=-0.2cm]{*}
(18,2) circle (1.5pt)node[below=-0.15cm]{*}
(18,4) circle (1.5pt) node[below=-0.5cm]{7}node[below=-0.15cm]{*}
(19,1) circle (1.5pt)
(19,3) circle (1.5pt) node[below=-0.5cm]{8}
(20,2) circle (1.5pt) node[below=-0.5cm]{9}
 ; 
\draw plot[mark=*] file {data/ScatterPlotExampleData.data};
\draw[->] (0,0) -- coordinate (x axis mid) (22,0);
    \draw[->] (0,0) -- coordinate (y axis mid)(0,12);
    \foreach \x in {0,1,2,3,4,5,6,7,8,9,10,11,12,13,14,15,16,17,18,19,20}
        \draw [xshift=0cm](\x cm,0pt) -- (\x cm,-3pt)
         node[anchor=north] {$\x$};
          \foreach \y in {1,2,3,4,5,6,7,8,9,10,11}
        \draw (1pt,\y cm) -- (-3pt,\y cm) node[anchor=east] {$\y$};
    \node[below=0.2cm, right=4.5cm] at (x axis mid) {$r$};
    \node[left=0.2cm, below=-2.8cm, rotate=0] at (y axis mid) {$a$};
 \draw[<-, blue](0.1,0.1)-- node[below=2cm,left=2cm]{$g$} (11,11);   
 \draw[<-, red](21.5,0.5)-- node[below=2cm,right=2.1cm]{$k$} (11,11);
  \end{tikzpicture} 
\caption{Nikulin classification}\label{ord2}
\end{figure}

\begin{theorem}\cite[Theorem 4.2.2]{nikulinfactor}\label{nikulinfactor}
The fixed locus of a non-symplectic involution on a $K3$ surface is
\begin{itemize}
\item empty if $r=10$, $a=10$ and $\delta=0$,
\item the disjoint union of two elliptic curves if  $r=10$, $a=8$ and $\delta=0$,
\item the disjoint union of a curve of genus $g$ and $k$ rational curves otherwise, where
 $g=(22-r-a)/2$, $k=(r-a)/2$.
\end{itemize}
\end{theorem}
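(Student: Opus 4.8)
The plan is to pin down the fixed locus from two numerical relations, one of Lefschetz type and one of Smith type, and then to use the signature of the invariant lattice to read off the \emph{shape} of $X^\iota$. Write $X^\iota=C_1\sqcup\cdots\sqcup C_N$ as a disjoint union of smooth curves (as recalled before the statement, there are no isolated fixed points), put $g_i:=g(C_i)$ and let $G:=\sum_i g_i$ be the total genus. Since $\iota^*$ acts as the identity on $H^0$ and $H^4$, vanishes on $H^1=H^3=0$, and has trace $r-(22-r)=2r-22$ on $H^2(X,\IR)$, the topological Lefschetz fixed-point formula gives
$$
\chi(X^\iota)=1-0+(2r-22)-0+1=2r-20 .
$$
As $\chi(X^\iota)=\sum_i(2-2g_i)=2N-2G$, this is the first relation $N-G=r-10$.

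For the second relation I would pass to $\mathbb{F}_2$-coefficients and exploit the $\IZ/2\IZ$-action. The invariant and anti-invariant lattices $H^2(X,\IZ)^{\pm}$ are both $2$-elementary and glue to the unimodular lattice $H^2(X,\IZ)$ along a common discriminant group $(\IZ/2\IZ)^{\oplus a}$; Smith theory relates $\dim_{\mathbb{F}_2}H^*(X^\iota,\mathbb{F}_2)$ to the $\iota^*$-invariants of $H^*(X,\mathbb{F}_2)$, and a reduction modulo $2$ of this gluing should show that the resulting Smith defect is measured exactly by the length $a$, yielding
$$
\dim_{\mathbb{F}_2}H^*(X^\iota,\mathbb{F}_2)=\dim_{\mathbb{F}_2}H^*(X,\mathbb{F}_2)-2a=24-2a .
$$
Since $\dim_{\mathbb{F}_2}H^*(X^\iota,\mathbb{F}_2)=\sum_i(2+2g_i)=2N+2G$, this is the second relation $N+G=12-a$. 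Solving the two relations gives $N=(r-a)/2+1$ and $G=(22-r-a)/2$.

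To present the answer in the stated form I would use that each class $[C_i]$ lies in the invariant lattice $H^2(X,\IZ)^+$, of signature $(1,r-1)$, that these classes are pairwise orthogonal (the $C_i$ are disjoint), and that $C_i^2=2g_i-2$ by adjunction since $K_X=0$. In a lattice of signature $(1,r-1)$ a family of pairwise orthogonal classes contains at most one class of positive self-intersection, and if it contains none then its isotropic members lie on a single ray. Hence, away from the degenerate configurations discussed below, exactly one component $C$ is non-rational, of genus $g=(22-r-a)/2$, while the remaining $k=N-1=(r-a)/2$ components are smooth rational $(-2)$-curves; the value of $\delta$ is then read off from the stated criterion that $\delta=0$ if and only if $[X^\iota]$ is divisible by two.

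Finally I would isolate the two exceptional cases, which are precisely those where the Smith identity above is no longer sharp. When $\iota$ acts freely the quotient $X/\iota$ is an Enriques surface, $X^\iota=\emptyset$, and one checks this forces $(r,a,\delta)=(10,10,0)$; when $X^\iota$ is a union of two \emph{homologous} elliptic curves the identity $[C_1]=[C_2]$ collapses the would-be rank-two isotropic sublattice to a single ray, consistently with the signature constraint, and one computes $(r,a,\delta)=(10,8,0)$. I expect the main obstacle to be the second relation: establishing $\dim_{\mathbb{F}_2}H^*(X^\iota,\mathbb{F}_2)=24-2a$, that is, identifying the Smith defect of the involution with the length $a$ of the discriminant group via a careful mod-$2$ analysis of the gluing of $H^2(X,\IZ)^{\pm}$, and then correctly separating the two degenerate $r=10$ configurations from the generic family.
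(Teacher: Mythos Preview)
The paper does not prove this theorem: it is quoted from Nikulin \cite[Theorem~4.2.2]{nikulinfactor} and used as input throughout (for instance in Lemma~\ref{invlatt} and in the computations of \S5--6). There is no proof in the paper to compare your attempt against.

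That said, your outline follows the classical route. The Lefschetz identity $N-G=r-10$ is correct, and the companion relation $N+G=12-a$ is also the right one; identifying the Smith defect with the length $a$ of the discriminant gluing of $H^2(X,\IZ)^{\pm}$ is indeed the crux, and is essentially what Nikulin does. Your Hodge-index remark (at most one $[C_i]$ of positive square among pairwise orthogonal classes in a hyperbolic lattice) correctly forces the generic shape.

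One point to sharpen. You write that the two exceptional triples are ``precisely those where the Smith identity above is no longer sharp.'' Only $(10,10,0)$ violates the identity (empty fixed locus, $0\neq 24-20$). For $(10,8,0)$ the identity holds on the nose: $N=2$, $G=2$, and $2N+2G=8=24-2\cdot 8$. What is exceptional there is only the \emph{shape}, and your own signature observation (pairwise orthogonal isotropic classes lie on a single ray) already accommodates two homologous elliptic fibres. What your sketch does not yet supply is how $\delta$ separates $(10,8,0)$ from $(10,8,1)$ --- two elliptic curves versus a genus-$2$ curve plus a rational curve, both compatible with $N=2$, $G=2$ --- and likewise $(10,10,0)$ from $(10,10,1)$ --- empty versus a single elliptic curve. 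That step uses the divisibility-by-two criterion for $\delta$ recalled just before the theorem, and is the remaining gap in your argument.
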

Figure \ref{ord2} shows all the values of the triple $(r,a,\delta)$ which are realized and the corresponding invariants $(g,k)$ of the fixed locus.
  
 We now assume that $X$ carries a symplectic automorphism $\sigma$ of prime order commuting with $\iota$. The minimal resolution $Y$ of $X/\langle\sigma\rangle$ is known to be a K3 surface and $\iota$ lifts to a non-symplectic involution $j$ on $Y$. The following proposition relates the invariants of $\iota$ and $j$. We denote by $\delta(\iota)$ and $\delta(j)$ the $\delta$-invariants of the invariant lattices of $\iota$ and $j$. 
 We recall that the order of a symplectic automorphism of prime order $p$ on a K3 surface 
is either $2, 3,5$ or $7$ by \cite{nikulin}.
 \begin{pro}\label{delta}
 Let $X$ be a K3 surface carrying a non-symplectic involution $\iota$ and a symplectic automorphism $\sigma$ of prime order $p>2$ commuting with $\iota$. Then $\iota$ induces a non-symplectic involution $j$ on the minimal resolution $Y$ of $X/\langle\sigma\rangle$ such that $\delta(\iota)=\delta(j)$.
 \end{pro}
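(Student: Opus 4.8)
The plan is to use the characterization of the invariant $\delta$ recalled just before the statement: for a non-symplectic involution, $\delta=0$ precisely when the class of its fixed locus is divisible by two in the (unimodular) lattice $H^2(-,\IZ)$ of the K3 surface. The existence and non-symplectic nature of $j$ are quick: $\omega_X$ is $\sigma$-invariant, so it descends to $X/\langle\sigma\rangle$ and pulls back to the holomorphic two-form $\omega_Y$ of the crepant resolution $Y$, and since $\iota$ commutes with $\sigma$ it descends to $\bar\iota$ on $X/\langle\sigma\rangle$ and lifts to $j$ on $Y$ with $j^*\omega_Y=-\omega_Y$. The whole statement then reduces to the equivalence
$$[\Fix(\iota)]\in 2H^2(X,\IZ)\iff [\Fix(j)]\in 2H^2(Y,\IZ).$$

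First I would describe $\Fix(j)$ geometrically. Since $p>2$ is prime, $\sigma$ has a finite set $\Sigma$ of isolated fixed points, the images in $X/\langle\sigma\rangle$ are $A_{p-1}$ singularities, and $Y$ is their minimal resolution. As $\iota$ commutes with $\sigma$ it permutes $\Sigma$, and the points split into those fixed by $\iota$ (which then lie on $\Fix(\iota)$, where $\iota$ acts as $\diag(1,-1)$) and pairs exchanged by $\iota$. Away from $\Sigma$ the quotient is étale of odd degree $p$, and a short argument using $\iota^2=\id$ together with the oddness of $p$ shows that there $\Fix(\bar\iota)$ is exactly the image of $\Fix(\iota)$; an exchanged pair produces two exceptional $A_{p-1}$-chains swapped by $j$, hence contributing no fixed curves. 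The delicate local point is a point of $\Sigma$ fixed by $\iota$: diagonalising $\sigma=\diag(\zeta,\zeta^{-1})$ and $\iota=\diag(1,-1)$ simultaneously, the induced involution on the singularity is $(u,v,w)\mapsto(u,-v,-w)$, and a toric computation on the resolution shows that $j$ fixes the even-indexed curves of the chain $E_1,\dots,E_{p-1}$ pointwise and acts as $-1$ on the odd-indexed ones. Thus $\Fix(j)$ is the strict transform of the image of $\Fix(\iota)$ together with the curves $E_a$ with $a$ even over each $\iota$-fixed point of $\Sigma$.

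To compare divisibilities I would use that $H^2(X,\IZ)$ and $H^2(Y,\IZ)$ are unimodular, so a class is divisible by two exactly when it meets every class evenly. On $Y$ I test separately against the exceptional curves and against the classes coming from $X$. Against an exceptional curve $E_b$, the $A_{p-1}$ intersection matrix gives $[\Fix(j)]\cdot E_b=-2$ for $b$ even and $[\Fix(j)]\cdot E_b=2$ for $b$ odd (the two units coming from the two pointwise-fixed curves adjacent to $E_b$), always even; the swapped chains are disjoint from $\Fix(j)$. Against a $\sigma$-invariant class $D_X$ on $X$, with corresponding class $D_Y$ on $Y$, a case analysis of the components of $\Fix(\iota)$ (those lying in a free $\sigma$-orbit of size $p$, and those preserved by $\sigma$, on which $\sigma$ necessarily acts with order $p$ since its fixed points are isolated) yields in every case the single identity
$$[\Fix(\iota)]\cdot D_X=p\,\big([\Fix(j)]\cdot D_Y\big).$$
Finally, since $[\Fix(\iota)]$ is $\sigma$-invariant and $p$ is odd, testing it against an arbitrary class has the same parity as testing it against its $\sigma$-invariant part; combining this with the displayed identity (where the factor $p$ is parity-neutral because $p$ is odd) and with the automatic evenness against the exceptional curves transports divisibility by two between $X$ and $Y$, giving the equivalence.

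I expect the main obstacle to be the local analysis at the $\iota$-fixed points of $\Sigma$: determining exactly which curves of the exceptional chain $j$ fixes, and checking that every exceptional curve meets $\Fix(j)$ evenly, is where the arithmetic of the resolution enters. The global comparison against invariant classes is then essentially formal, and it is precisely the hypothesis $p>2$, i.e. the oddness of $p$, that makes the factor $p$ harmless modulo two; the argument genuinely breaks down for $p=2$.
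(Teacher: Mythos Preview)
Your approach diverges from the paper's at the global step. Both use the same characterization of $\delta$ and carry out essentially the same local analysis at the $A_{p-1}$ points; your labeling of the fixed exceptional curves as even-indexed is simply the reverse of the paper's odd-indexed convention (reflecting which end of the chain the strict transform meets) and is harmless. But the paper never tests intersection numbers. Instead it writes down the $\IQ$-divisor pullback $r^*(\pi(X^\iota))=\tilde X^\iota+\tfrac{1}{p}\sum_i iE_i$, compares it with $Y^j=\tilde X^\iota+\sum_k E_{2k-1}$, and reads off a congruence $r^*(p\beta)\equiv [Y^j]\pmod 2$ of \emph{divisor classes} on $Y$. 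Divisibility by two of $[X^\iota]$ is then transported to $p\beta$ by the projection formula for the \'etale cover $\pi_0$, and between $p\beta$ and $r^*(p\beta)$ by $r^*$ and $r_*$. This is a direct equality of classes modulo $2$, so no question of spanning sets arises.

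Your intersection-number argument, by contrast, has a genuine gap as written. To conclude that $[\Fix(j)]$ is divisible by two you must test it against \emph{every} class in the unimodular lattice $H^2(Y,\IZ)$, but you only test against the exceptional curves $E_b$ and against classes $D_Y$ ``corresponding to'' $\sigma$-invariant $D_X$. You neither specify this correspondence nor argue that these classes generate $H^2(Y,\IZ)$; in fact they typically generate only the proper sublattice $E\oplus E^\perp$. The rescue is that the index $[H^2(Y,\IZ):E\oplus E^\perp]$ divides a power of $p$ (coming from the discriminants of the $A_{p-1}$ summands), hence is \emph{odd}, so testing against $E\oplus E^\perp$ does suffice for parity --- but this is precisely the point you must make explicit, and it is another place where the hypothesis $p>2$ is essential. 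The same issue recurs on the $X$ side: symmetrized classes $\sum_k(\sigma^k)^*c$ need not fill out $H^2(X,\IZ)^\sigma$, but again the cokernel is $p$-torsion. Once you fix the correspondence (e.g.\ via restriction to the open sets and pullback/pushforward along the \'etale cover $\pi_0$) and invoke these odd-index facts, your approach can be completed; but as it stands the argument is incomplete at exactly this step.
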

 \begin{proof} Let $\pi:X\to X/\langle\sigma\rangle$ be the natural quotient map. 
 Since $\sigma$ is symplectic, the set $F$ of its fixed points is finite 
 and the quotient $X/\langle\sigma\rangle$ has singular points of type $A_{p-1}$ at $S=\pi(F)$ (see \cite{nikulin}).
We denote by $r:Y\to X/\langle\sigma\rangle$ the minimal resolution of $X/\langle\sigma\rangle$ 
and by $E^q_1,\dots,E^q_{p-1}$ the irreducible components of $r^{-1}(q)$, $q\in S$, with 
$E^q_i\cdot E^q_{i+1}=1$, $i=1,\dots,p-2$.

 Since $\iota$ commutes with $\sigma$, it induces an involution 
 on $X/\langle\sigma\rangle$ which lifts to a non-symplectic involution $j$ on $Y$. 
 Let $X^{\iota}$ be the fixed locus of $\iota$ and $Y^{j}$ be the fixed locus of $j$. 
Since the orders of $\iota$ and $\sigma$ are relatively prime, 
the fixed locus of the involution induced by $\iota$ on $X/\langle\sigma\rangle$ 
coincides with $\pi(X^{\iota})$. 
Observe that in general this is not a Cartier divisor since it passes through 
the singular points of $X/\langle \sigma \rangle$.
Taking the pull-back of $\pi(X^{\iota})$ to $Y$ we obtain the following $\IQ$-divisor:
\begin{equation}\label{e1}
r^*(\pi(X^{\iota}))=\tilde X^{\iota}+\frac{1}{p}\sum_{q\in S\cap \pi(X^{\iota})} \sum_{i=1}^{p-1} iE^q_i,
\end{equation}
where $\tilde X^{\iota}$ is the proper transform of $\pi(X^\iota)$ and it only intersects $E^q_{p-1}$ 
for any $q\in S\cap \pi(X^{\iota})$.
Since $j$ leaves invariant the exceptional divisors over 
$S\cap \pi(X^{\iota})$ and, being non-symplectic,  it only fixes smooth disjoint curves, we have
\begin{equation}\label{e2}
Y^j=\tilde X^{\iota}+\sum_{k=1}^{\frac{p-1}{2}}\sum_{q\in S\cap \pi(X^{\iota})} E^q_{2k-1}.
\end{equation}
Observe that the natural inclusions induce isomorphisms $\Cl(X)\cong \Cl(X- F)$ 
and $\Cl(X/\langle\sigma\rangle)\cong \Cl(X/\langle\sigma\rangle- S)$ 
since $F$ and $S$ have codimension two.
Moreover $\pi_0:X- F\to X/\langle\sigma\rangle- S$ is an unramified covering.
Now assume that $\alpha:=[X^{\iota}]$ is divisible by  two in $H^2(X,\IZ)$ or, equivalently 
in $\Cl(X)$, and let $\beta:=[\pi_0(X^{\iota})]$.
Then by projection formula ${\pi_0}_*(\alpha)={\pi_0}_*\pi_0^*(\beta)=p\beta$ 
is also divisible by two.
From equalities (\ref{e1}) and (\ref{e2}) we get:
$$r^*(p\beta)\equiv  [Y^j]\ (\rm{mod}\, 2).$$
Thus $[Y^j]$ is divisible by two. Conversely, if $[Y^j]$ is divisible by two, the same is true for $r^*(p \beta)$ 
 by the previous congruence.  
By projection formula $\beta$ is divisible by two in  $\Cl(X/\langle\sigma\rangle)$, 
thus the same is true for $\pi_0^*(\beta)=\alpha$.
 \end{proof}

 \begin{remark}
Equation \ref{e1} can be checked by means of a local computation at an $A_{p-1}$ singularity, 
for example computing  the pull-back of the invariant divisors 
of the toric variety associated to the fan with rays $(p,1-p),(0,1)$ by means of  MAGMA \cite{magma}.

 \end{remark}


\subsection{The Dolgachev-Voisin mirror symmetry}\label{DVmirror}
Let $M$ be an even non-degenerate lattice of signature $(1,\rho-1)$, $1\leq \rho\leq 19$.
\begin{definition}
An {\it $M$-polarized} K3 surface is a pair $(X,j)$ where $X$ is a K3 surface and $j:M\hookrightarrow \Pic(X)$ is a primitive lattice embedding.
\end{definition}
 Dolgachev in \cite{dolgachevmirror} constructs a (coarse) moduli space $\KK_M$ parametrizing $M$-polarized K3 surfaces, which has dimension $20-\rho$. Assume now that 
 $$
 M^{\perp}\cap H^2(X,\IZ)\cong U\oplus\bar{M}, 
 $$
 where $U$ is a copy of the hyperbolic plane. As described in \cite{dolgachevmirror} one can define the {\it mirror moduli space} of $\KK_M$ as the 
 moduli space $\KK_{\bar{M}}$ of $\bar{M}$-polarized K3 surfaces: one can use the primitive embedding $\bar{M}
 \hookrightarrow M^{\perp}\subset H^2(X,\IZ)$ to get a primitive even non-degenerate sublattice of signature $(1,(20-\rho)-1)$ of the K3 lattice $U^3\oplus E_8(-1)^2$. Observe that for generic K3 surfaces $X_M\in\KK_M$ and $X_{\bar{M}}\in\KK_{\bar{M}}$ 
 we have
 $$
 \dim \KK_M=20-\rho= \rank \Pic(X_{\bar{M}}),\quad  
 \dim \KK_{\bar{M}}=\rho=\rank \Pic(X_M).
 $$
 We now consider the special case when $X$ is a K3 surface admitting a non-symplectic involution
 and $M=H^2(X,\IZ)^{+}$. We denote the anti-invariant lattice by 
 $$H^2(X,\IZ)^{-}:=(H^2(X,\IZ)^{+})^{\perp}\cap H^2(X,\IZ).$$
 \begin{pro}\cite[Lemma 2.5, \S 2.3]{voisin}
 Assume that $(r,a,\delta)\not= (14,6,0)$ and $g\geq 1$. Then:
 \begin{itemize}
 \item $H^2(X,\IZ)^{-}\cong U\oplus \bar{M}$;
 \item  the generic K3 surface $X_{\bar{M}}\in \KK_{\bar{M}}$ has a non-symplectic involution;
\item if  $X_M\in\KK_M$ has invariants $(r,a,\delta)$ then the invariants of $X_{\bar{M}}\in\KK_{\bar{M}}$ are ${(20-r,a,\delta)}$.
 \end{itemize}
 \end{pro}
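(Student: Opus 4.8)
The plan is to derive all three assertions from Nikulin's theory of even $2$-elementary lattices, exploiting that the invariant lattice $M=H^2(X,\IZ)^{+}$ is primitively embedded in the unimodular K3 lattice $L:=H^2(X,\IZ)\cong U^{3}\oplus E_8(-1)^{2}$ of signature $(3,19)$.

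First I would pin down the shape of the anti-invariant lattice. Because $M$ is primitive in the unimodular lattice $L$, its orthogonal complement $H^2(X,\IZ)^{-}=M^{\perp}$ carries the discriminant form $-q_M$; as $M$ is $2$-elementary, negation acts trivially on the $2$-torsion discriminant group and preserves the invariants, so $M^{\perp}$ is again $2$-elementary with the same pair $(a,\delta)$. Its signature is $(2,20-r)$, complementary to the signature $(1,r-1)$ of $M$ inside $L$.

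The crux is the splitting $M^{\perp}\cong U\oplus\bar M$. Here I would invoke Nikulin's classification, by which an indefinite even $2$-elementary lattice is determined up to isometry by $(t_+,t_-,a,\delta)$, together with the splitting criterion that such a lattice with $t_+\geq 1$, $t_-\geq 1$ and $\rank\geq a+2$ admits $U$ as an orthogonal summand, the sole obstruction (when $\delta=0$) being the configurations in which only $U(2)$, rather than $U$, splits off. The hypothesis $g\geq 1$ is equivalent to $22-r\geq a+2$, i.e. $\rank M^{\perp}\geq a+2$, and the triple $(14,6,0)$ excluded from the statement is precisely the exceptional case; together these yield $M^{\perp}\cong U\oplus\bar M$ with $\bar M$ even $2$-elementary of signature $(1,19-r)$ and invariants $(a,\delta)$, since the unimodular summand $U$ does not affect the discriminant form. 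I expect this lattice-theoretic splitting to be the main obstacle, as it is exactly the point at which both hypotheses are forced upon us.

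For the last two bullets I would argue through the Torelli theorem. Fix a primitive embedding $\bar M\hookrightarrow L$ and let $\theta\in O(L)$ be the isometry acting as $+1$ on $\bar M$ and as $-1$ on $\bar M^{\perp}$; because $\bar M$ is $2$-elementary, $\theta$ induces the identity on both discriminant groups and therefore extends to a genuine isometry of the overlattice $L$. For the generic $X_{\bar M}\in\KK_{\bar M}$ one has $\Pic(X_{\bar M})=\bar M$ and the period class lies in $\bar M^{\perp}\otimes\IC$, so $\theta$ is a Hodge isometry acting by $-1$ on $H^{2,0}$; it fixes $\Pic(X_{\bar M})$ pointwise, hence preserves an ample class together with the orientation of the positive $3$-space, and the strong Torelli theorem realizes it as a non-symplectic involution $\iota_{\bar M}$. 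Finally, the invariant lattice of $\iota_{\bar M}$ is $\{x\in L:\theta x=x\}=\bar M$ by primitivity, of rank $20-r$ and invariants $(a,\delta)$; hence the triple attached to $X_{\bar M}$ is $(20-r,a,\delta)$, as asserted.
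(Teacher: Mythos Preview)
The paper does not supply its own proof of this proposition; it is quoted from Voisin without argument. Your outline is correct and is essentially the standard route found in Voisin and in Dolgachev: $M^\perp$ is again $2$-elementary with the same $(a,\delta)$ because $L$ is unimodular and negation is trivial on a $2$-torsion discriminant group; the decomposition $M^\perp\cong U\oplus\bar M$ then follows from Nikulin's uniqueness theorem for indefinite even $2$-elementary lattices once one verifies that such a lattice of signature $(1,19-r)$ with invariants $(a,\delta)$ exists, and Nikulin's existence constraints hold precisely under the hypotheses $g\geq 1$ and $(r,a,\delta)\neq(14,6,0)$; finally the non-symplectic involution on the generic $\bar M$-polarized surface is produced via the strong Torelli theorem from the isometry acting as $+1$ on $\bar M$ and $-1$ on $\bar M^\perp$, which glues over the discriminant groups because both pieces are $2$-elementary.

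The only soft spot is your phrasing of the splitting step as a single ``rank $\geq a+2$ criterion with one $\delta=0$ exception''. That is morally right but informal: the honest argument checks Nikulin's full list of existence conditions for the would-be $\bar M$ (signature $(1,19-r)$, length $a$, parity $\delta$) and observes that within the range appearing in Figure~\ref{ord2} with $g\geq 1$ the sole failure is at $(r,a,\delta)=(14,6,0)$, where the putative $\bar M$ would have rank equal to $a$ with $\delta=0$ and the signature congruence is violated. This is a sharpening of wording rather than a gap in the logic.
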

\begin{remark}\text{}
\begin{itemize}
\item In Figure \ref{ord2} one can see the mirror couples making a reflection with respect to the axis through
 $r=10$ and $1\leq g\leq 10$ and deleting the axis with $g=0$ and the point $(r,a,\delta)=(14,6,0)$.
\item Since K3 surfaces with a non-symplectic involution are projective
 the invariant lattice contains an ample class. One can then consider
instead of $\KK_M$ the moduli space $\KK_M^a$ of ample $M$-polarized K3 surfaces and do the same 
construction of mirror moduli spaces as above \cite{dolgachevmirror}.
\end{itemize}
\end{remark}

\section{K3 surfaces in weighted projective spaces with non-symplectic involutions}
In this section we will consider K3 surfaces obtained as desingularizations of 
hypersurfaces of the following type in some weighted projective space:
\begin{equation}\label{k3}
W(x,y,z,w)=x^2-f(y,z,w)=0.
\end{equation}
Observe that any such surface carries the non-symplectic involution $\iota: x\mapsto -x$.
We will describe their singularities and we will explain how to 
compute the triple of invariants $(r,a,\delta)$ of $\iota$ introduced in section 4. 
We recall that $h_{ij}:=\gcd(w_i,w_j)$.
\begin{lemma}\label{sing}
Let $W$ be a quasismooth and  Gorenstein hypersurface in 
 $\IP(w_1,w_2,w_3,w_4)$ defined by 
an invertible potential as in (\ref{k3}).
 Then the singular points of $W$ are Du Val singularities 
 of type $A_k$ and can only occur at the vertices 
 $P_2,P_3,P_4$ or along the edges $P_iP_j$ with $1\leq i,j\leq 4$.
 More precisely:
 \begin{itemize}
 \item[a)]  if $w_i>2$, $i=2,3,4$, then $P_i\in W$  if and only if $w_i\not|d$ 
 and in this case it 
 is a singular point of type $A_{w_i-1}$;
 \item[b)] if $i,j>1$, $w_i,w_j>2$ and $h_{ij}>1$, then $W$ intersects $P_iP_j-\{P_i,P_j\}$ 
 at $\lfloor \frac{d h_{ij}}{w_iw_j} \rfloor$ singular points of type $A_{h_{ij}-1}$;
 \item[c)] if $i,j>1$, $w_i=2$ and $w_i|w_j$, then $W$ intersects $P_iP_j-\{P_j\}$ 
 at $\lfloor \frac{d}{w_j} \rfloor$ singular points of type $A_{1}$;
  \item[d)] if $h_{1i}>1$, then $P_i\not\in W$ and $W$ intersects $P_1P_i$ 
  at two points if $w_i|w_1$ and at one point otherwise.
  In both cases the intersection points are singularities of type $A_{h_{1i}-1}$.
 \end{itemize}
\end{lemma}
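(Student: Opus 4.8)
The plan is to identify the singular points of $W$ with the points where the quasismooth hypersurface meets the singular locus of the ambient space, and then to read off the local type from the order of the residual cyclic group, exploiting the special shape $x^2-f$ together with the Calabi-Yau relation. First I would record two arithmetic consequences of $W$ being Calabi-Yau of the form (\ref{k3}): since $d=\sum_i w_i$ and the monomial $x^2$ has degree $d$, one gets $w_1=d/2$, hence $w_1=w_2+w_3+w_4$. As $f$ has no constant term, $W(P_1)=1\neq 0$, so $P_1\notin W$; this already restricts the singular vertices to $P_2,P_3,P_4$. Because $W$ is quasismooth its affine cone is smooth off the origin, so $W$ is singular only where it meets the singular locus of $\IP(w_1,\dots,w_4)$, which by \S\ref{weighted} is the union of the vertices $P_i$ and the edges $P_iP_j$ with $h_{ij}>1$. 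Being a well-formed divisor, $W$ contains no singular edge (the restriction is a nonzero binary form), so it meets each singular edge in finitely many points. Locally each such point is a smooth surface germ modulo a cyclic group, hence a cyclic quotient singularity; being moreover Gorenstein, it is a rational double point, and the only cyclic-quotient rational double points are those of type $A_k$. This settles the first assertion and localizes the problem.

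Next I would compute the type on each stratum from the order of the local group, which is $\mu_{w_i}$ at a vertex $P_i$ and $\mu_{h_{ij}}$ at an interior point of an edge $P_iP_j$, yielding $A_{w_i-1}$ and $A_{h_{ij}-1}$ respectively once the residual action is shown to be the $A$-type action $\tfrac{1}{r}(1,-1)$. For a vertex $P_i\in W$ (so $w_i\nmid d$), quasismoothness forces a pointing monomial $x_i^{\,b}x_\ell$ in $f$; solving for $x_\ell$ exhibits a smooth germ whose two coordinates are $x$ and the remaining variable $x_m$, carrying $\mu_{w_i}$-weights $w_1$ and, using $w_1=w_2+w_3+w_4$ and $b w_i+w_\ell=d$, a weight $\equiv -w_1 \pmod{w_i}$. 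Here I invoke case (d): since $P_i\in W$ forces $h_{1i}=1$, the weight $w_1$ is a unit modulo $w_i$ and the action $\tfrac{1}{w_i}(w_1,-w_1)$ normalizes to $A_{w_i-1}$. The criterion $P_i\in W\iff w_i\nmid d$ of (a) then follows by identifying $W(P_i)\neq 0$ with the presence of the pure power $x_i^{\,d/w_i}$, which for these invertible potentials occurs exactly when $w_i\mid d$.

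For the point counts in (b), (c), (d) I would restrict $W$ to the edge $P_iP_j$ and count the interior zeros of the resulting binary form via the invariant coordinate $s=x_i^{\,v}/x_j^{\,u}$, where $w_i=h_{ij}u$, $w_j=h_{ij}v$, $\gcd(u,v)=1$. Writing the restriction in the chart $x_j=1$ as $x_i^{\,m_0}Q(x_i^{\,v})$ and observing that $\mu_{w_j}$ permutes the $v$ preimages of each root of $Q$ transitively, each root of $Q$ gives exactly one edge point, and the elementary count of admissible exponents yields $\deg Q=\lfloor d\,h_{ij}/(w_iw_j)\rfloor$; this gives (b) and its specialization (c) with $h_{ij}=2$. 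For the edges $P_1P_i$ through the $x$-vertex in (d), the same computation applies to $x^2=c\,x_i^{\,d/w_i}$: integrality of $d/w_i$ forces $v\in\{1,2\}$, and $v=1\iff w_i\mid w_1$ gives $s^2=c$ (two points) while $v=2\iff w_i\nmid w_1$ gives $s=c$ (one point), with $P_i\notin W$ in both cases.

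The step I expect to be the main obstacle is the input to (d) that $h_{1i}>1$ forces $w_i\mid d$ (equivalently $P_i\notin W$): this does not follow from $\gcd$ considerations alone but requires combining the relation $w_1=w_2+w_3+w_4$ with well-formedness and the atomic classification of Kreuzer--Skarke, and I anticipate a short case analysis over the chain, loop and Fermat summands to exclude the configurations in which a weight $w_i$ with $\gcd(w_1,w_i)>1$ fails to divide $d$. The same structural input underlies the equivalence ``pure power present $\iff w_i\mid d$'' used in (a). The verification of the exact floor values in (b)--(c), while elementary, is the other place demanding care about the boundary term $m_0$, i.e. whether the endpoint vertices themselves lie on $W$.
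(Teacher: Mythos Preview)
Your proposal is correct and follows essentially the same route as the paper: localize singularities to vertices and edges via quasismoothness, invoke the Gorenstein condition to get Du Val type $A_k$, handle (a) by the atomic case split you anticipate, and count edge intersections for (b)--(d) via the restricted binary form and the invariant coordinate on each edge. The input to (d) is slightly easier than you fear---if $P_i\in W$ the pointing monomial $x_i^{b}x_\ell$ in $f$ gives $w_\ell\equiv d=2w_1\pmod{w_i}$, so any common prime of $w_1,w_i$ also divides $w_\ell$, contradicting normalization of the ambient space without a full atomic case analysis---and your explicit verification of the $\tfrac{1}{w_i}(1,-1)$ action at vertices is a welcome bonus that the paper omits in favor of citing the general Gorenstein fact.
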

\begin{proof} We recall that, since $W=0$ is quasismooth, then  
it is well-formed and $W$ is non-degenerate. This implies that the singularities of $W$ 
can only appear along the vertices $P_i$ or the edges $P_iP_j$, where 
the singular points of  the ambient projective space occur.
Since $W$ is Gorenstein and quasismooth, then it has only cyclic, canonical   
singularities \cite{cortigo}, i.e. its singular points are Du Val of type $A_k$. 
More precisely, a vertex $P_i\in W$ is a singular point of type 
$A_{w_i-1}$   and 
an intersection point of $W$ with an edge 
$P_iP_j$  (outside of the vertices) is a singularity of type 
$A_{h_{ij}-1}$
where $h_{ij}=\gcd(w_i,w_j)$.

We first observe that $P_1=(1,0,0,0)\not\in W$.
Moreover, if $w_2$ does not divide the degree $d$ of $W$, then clearly $P_2=(0,1,0,0)\in W$. 
Now assume that $w_2>2$, $w_2$ divides $d$ and $P_2\in W$. 
Thus $f$ is of one of the following types 
up to a change of coordinates:
$$y^az+z^bw+w^c,\quad  y^az+z^bw+w^cy,\quad y^az+z^b+w^c,\quad y^az+z^by+w^c.$$
In the first case, the linear system
$$\left\{
\begin{array}{l}
2w_1=d\\
aw_2+w_3=d\\
bw_3+w_4=d\\
cw_4=d
\end{array}
\right.
$$
implies that $w_2$ divides $w_3,w_4$, contradicting the fact that $W$ is well-formed.
Similarly, the second case does not occur.
In the third case the analogous linear system gives that $w_2$ divides $w_3$ and $2w_1$.
Since $W$ is well-formed, this implies that $w_2=2$, giving a contradiction. The last case is similar.
Thus, in case $w_2$ divides $d$ and $w_2>2$, then $P_2\not\in W$. This proves a).

If $w_2=2$, then $P_2$ can be either on $W$ or not, but in any case 
its singular type is the same of the generic point of the 
singular edges containing it. 
A similar discussion holds for $P_3$ and $P_4$.

Now assume that $w_2,w_3>2$ and $h_{23}=\gcd(w_2,w_3)>1$. 
The number of intersection points 
of $W$ with the edge $P_2P_3\cong 
\IP(w_2,w_3)\cong \IP(v_2,v_3)$, 
where $v_i:=\frac{w_i}{h_{23}}$,
only depends on the weights.
In fact, assume that $P_2,P_3\in W$ and let $d':=d/h_{23}$.
Then $\bar f:=\frac{f(0,y,z,0)}{yz}$ is of the form
$$\bar f=y^{\frac{d'-v_2-v_3}{v_2}}+z^{\frac{d'-v_2-v_3}{v_3}}.$$
Thus we obtain that $\bar f/z^{\frac{d'-v_2-v_3}{v_3}}=(\frac{y^{v_3}}{z^{v_2}})^{\frac{d'-v_2-v_3}{v_2v_3}}+1$,
so that, since $y^{v_3}/z^{v_2}$ is an affine coordinate,  
$f$ has $\frac{d'-v_2-v_3}{v_2v_3}$ distinct points outside of the vertices.
Observe that such number equals $\lfloor \frac{dh_{23}}{w_2w_3}\rfloor$.
In fact 
$$
\frac{dh_{23}}{w_2w_3}-\frac{d'-v_2-v_3}{v_2v_3}=\frac{a_2+a_3}{a_2a_3},
$$
where $a_i:=w_i/h_{23}$, $i=2,3$.
If $a_2>2$ and $a_3>2$, then the right hand side is clearly smaller than one.
Otherwise, since $a_2$ and $a_3$ are relatively prime, one of them 
would be equal to one, for example $w_2=h_{23}$.
 This contradicts the fact that $w_2$ does not divide $d$, since $P_2\in W$, by 
 the first point in the proof.

The case when either $P_2$ or $P_3$, or both, do not belong to $W$ is similar 
(see also the proof of \cite[Lemma I.6.3]{fletcher}). This proves b) and c).

Finally, assume that $h_{12}=\gcd(w_1,w_2)>1$.
It can be easily proved that $P_2\not\in W$ since $W$ is well-formed.
In this case the intersection of $f$ with the edge 
$P_1P_2$ is given by the solutions of an equation of type 
$x^2-y^a=0$ in $\IP(w_1,w_2)\cong \IP(v_1,v_2)$,
where $v_i:=w_i/h_{12}$. Observe that $w_2$ divides $2w_1$ in this case.
The previous equation has two solutions if $w_2$ divides $w_1$,
since in this case $\frac{x}{y^{1/2}}$ is a coordinate.
Otherwise, $w_2=2\gcd(w_1,w_2)$,  and 
the equation has a unique solution since a 
coordinate function is $\frac{x^2}{y^a}$. This proves d).
\end{proof}

\begin{example}\label{9432} Consider a quasismooth and invertible potential $W(x,y,z,w)=x^2-f(y,z,w)$ defining a 
degree $18$ hypersurface in $\IP(9,4,3,2)$.
Observe that $W$ has a singular point of type $A_3$ at $P_2$ since $w_2$ does not divide $18$. 
On the other hand, $P_3\not\in W$ since $w_3>2$ and $w_3$ divides $18$. 
The point $P_4$ can be either in $W$ or not, depending on $f$.
The surface $W$ intersects the edge $P_1P_3$ in $2$ points, exchanged by $\iota$, 
since $w_3$ divides $w_1$. These are singularities of type $A_2$.
Finally, we consider the intersection of $W$ with the edge $P_2P_4\cong \IP(2,1)$.
Observe that in this case we obtain a degree $9$ equation $f'(y,w)=0$.
If $P_4\in W$, then $W$ intersects the edge in $\frac{9-1}{2}=4$ points outside the vertices.
Otherwise, if $P_4\not\in W$, then $W$ intersects the edge in $\frac{9-1-2}{2}=3$ points outside the vertices.
In any case we have exactly one singular point of type $A_3$ and 
four singular points of type $A_1$ along the edge.
\end{example}

Let $W\subset \IP(w_1,w_2,w_3,w_4)$ be
defined by an equation of type  (\ref{k3})
and let $\gamma:X\to W$ be its minimal resolution.
We will denote by  $\iota $ both the involution 
$x\mapsto -x$ on $W$ and the involution induced by this on $X$.
We will consider the following commutative diagram, 
where $\IP:=\IP(w_2,w_3,w_4)$ and $\gamma_1:\widetilde \IP\to \IP$ is its minimal resolution, 
$\pi$ and $\tilde \pi$ are the quotients by $\iota$ and
 $\gamma_2$ is the blow up of $\widetilde \IP$ at the singular 
 points of the pull-back of the 
 branch locus of $\pi$. 
 $$
 \xymatrix{
 X\ar[rr]^\gamma\ar[d]_{\tilde \pi} & & W\ar[d]^{\pi} \\
 Y\ar[r]^{\gamma_2} & \widetilde \IP \ar[r]^{\gamma_1} & \IP
 }
 $$
 \begin{lemma} \label{invlatt}
 Assume that the fixed locus of $\iota$ on $X$ is of the form
 $$C\cup E_1\cup\cdots \cup E_k,$$
 where $g(C)\geq 1$ and $g(E_i)=0$.
 Then the invariant lattice $H^2(X,\IZ)^+$ is generated by $\tilde\pi^*\Pic(Y)$ and the classes 
 of $E_1,\dots,E_k$.
 \end{lemma}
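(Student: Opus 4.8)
The plan is to realise $\tilde\pi\colon X\to Y$ as the quotient double cover and to compare the two natural lattices inside $H^2(X,\IZ)$ by a discriminant count. Since $\iota$ is non-symplectic it acts by $-1$ on $H^{2,0}(X)$, so $H^2(X,\IZ)^+\subseteq H^{1,1}(X)\cap H^2(X,\IZ)=\Pic(X)$ and I may argue throughout with divisor classes. As the fixed locus $C\cup E_1\cup\cdots\cup E_k$ is a disjoint union of smooth curves, $X/\iota$ is smooth and, by construction of the diagram, is identified with the rational surface $Y$; thus $\tilde\pi$ is a finite double cover branched along the smooth curve $B:=\tilde\pi(X^\iota)=B_0\sqcup B_1\sqcup\cdots\sqcup B_k$, with $\tilde\pi$ restricting to isomorphisms $C\xrightarrow{\sim}B_0$ and $E_i\xrightarrow{\sim}B_i$. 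Writing $\mathcal O_Y(B)=\mathcal O_Y(2L)$ for the line bundle defining the cover and using simple ramification $\tilde\pi^*B_j=2R_j$ (with $R_0=C$, $R_i=E_i$), torsion-freeness of $H^2(X,\IZ)$ yields $\tilde\pi^*L=[C]+\sum_i[E_i]$. In particular $[C]$ already lies in the group generated by $\tilde\pi^*\Pic(Y)$ and the $[E_i]$, so it suffices to show that these classes generate $H^2(X,\IZ)^+$.

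Next I would pin down the index $[\,H^2(X,\IZ)^+:\tilde\pi^*\Pic(Y)\,]$. The pullbacks are $\iota$-invariant, so $\tilde\pi^*\Pic(Y)\subseteq H^2(X,\IZ)^+$, and since $\tilde\pi^*\colon H^2(Y,\IQ)\to H^2(X,\IQ)^+$ is an isomorphism both lattices have rank $r$; in particular $\rank\Pic(Y)=r$. Because $Y$ is a smooth rational surface, $\Pic(Y)=H^2(Y,\IZ)$ is unimodular, and as $\tilde\pi$ has degree two the pullback multiplies the cup form by $2$, so $\tilde\pi^*\Pic(Y)$ is isometric to $\Pic(Y)(2)$ and has discriminant of absolute value $2^r$. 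On the other hand $H^2(X,\IZ)^+$ is $2$-elementary with discriminant of absolute value $2^a$. The formula $|\det N|=[\Lambda:N]^2\,|\det\Lambda|$ for the finite-index inclusion then reads $2^r=[\,H^2(X,\IZ)^+:\tilde\pi^*\Pic(Y)\,]^2\,2^a$, whence the index equals $2^{(r-a)/2}=2^{k}$ by Theorem~\ref{nikulinfactor}.

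Set $M:=\tilde\pi^*\Pic(Y)+\sum_i\IZ[E_i]\subseteq H^2(X,\IZ)^+$. Since $2[E_i]=\tilde\pi^*[B_i]\in\tilde\pi^*\Pic(Y)$, the quotient $M/\tilde\pi^*\Pic(Y)$ is an $\mathbb F_2$-vector space spanned by $\overline{[E_1]},\dots,\overline{[E_k]}$, so by the index computation $M=H^2(X,\IZ)^+$ \emph{if and only if} these $k$ classes are $\mathbb F_2$-independent modulo $\tilde\pi^*\Pic(Y)$. Under the identification $(\tilde\pi^*\Pic Y)^\vee/\tilde\pi^*\Pic Y\cong\Pic(Y)/2\Pic(Y)$ the class $\overline{[E_i]}$ corresponds to $[B_i]\bmod 2$, so this is the statement that no nonempty subfamily of the rational branch curves $B_1,\dots,B_k$ has a sum divisible by two in $\Pic(Y)$. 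This independence is the main obstacle: it is invisible to intersection numbers (each $B_i$ is a $(-4)$-curve, and the putative divisibility is numerically consistent), and must be read off from the explicit geometry of $Y$. I would settle it using the resolution $Y\xrightarrow{\gamma_2}\widetilde\IP\xrightarrow{\gamma_1}\IP$, where the $B_i$ are strict transforms of coordinate lines and exceptional components of $\gamma_1,\gamma_2$ and hence belong to an explicit generating set of $\Pic(Y)$ in which their independence modulo $2$ is transparent. Alternatively, one can avoid the independence step altogether by proving generation directly: the Gysin sequence of the ramification $R=C\sqcup\bigsqcup_iE_i\hookrightarrow X$, combined with the fact that $\tilde\pi$ is \'etale over $Y\setminus B$, shows that $H^2(X,\IZ)^+$ is generated by $\tilde\pi^*\Pic(Y)$ together with the fundamental classes $[R_j]$, and then the index count forces the independence a posteriori.
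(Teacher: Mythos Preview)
Your argument tracks the paper's almost exactly: both establish that $\tilde\pi^*\Pic(Y)\cong\Pic(Y)(2)$ is a sublattice of $H^2(X,\IZ)^+$ of full rank $r$ and discriminant $2^r$, and both conclude by a discriminant/index count using $k=(r-a)/2$ from Theorem~\ref{nikulinfactor}. The paper simply writes ``$x_i\notin\tilde\pi^*\Pic(Y)$'' and then asserts that the lattice generated by $\tilde\pi^*\Pic(Y)$ and $x_1,\dots,x_k$ has determinant $\pm 2^a$; it does not pause over the point you single out as ``the main obstacle'', namely the $\mathbb F_2$-independence of $\overline{[E_1]},\dots,\overline{[E_k]}$ in $M/\tilde\pi^*\Pic(Y)$ (equivalently, of $[B_1],\dots,[B_k]$ in $\Pic(Y)/2\Pic(Y)$). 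So your write-up is in fact \emph{more} scrupulous than the paper's on this step.

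That said, your proposal does not close the gap either. Route~(a), ``read it off from the explicit geometry of $Y$'', is not a proof of the lemma as stated: the lemma is phrased for an arbitrary K3 with non-symplectic involution of the given fixed-locus type, not only for the particular $Y=\gamma_2\!\circ\!\gamma_1$-resolution arising later in Section~5, and in any case you would still owe the verification. Route~(b), the Gysin/\'etale argument, is the right general idea---from $H^2(X\setminus R,\IZ)^{\iota}\cong\tilde\pi^*H^2(Y\setminus B,\IZ)$ for the \'etale double cover and the Gysin sequences for $R\hookrightarrow X$ and $B\hookrightarrow Y$ one can indeed show that $H^2(X,\IZ)^+$ is generated by $\tilde\pi^*\Pic(Y)$ together with the classes of the ramification components---but you only gesture at it. As written, then, your proof and the paper's proof share the same unjustified step; you have the merit of naming it, but neither of your two suggested fixes is actually carried out.
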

\begin{proof}
We first observe that $\tilde\pi^*\Pic(Y)\otimes \IQ= H^2(X,\IQ)^+$.
In fact, if $x\in H^2(X,\IZ)$, then $\tilde\pi^*\tilde \pi_*(x)=x+\iota^*x$.
This proves that $\tilde\pi^*\Pic(Y)\subset H^2(X,\IZ)^+$ and that 
$2x\in \tilde\pi^*\Pic(Y)$ for any $x\in H^2(X,\IZ)^+$.
Let $r=\rk H^2(X,\IZ)^+=\rk \Pic(Y)$.

Since $\iota$ is a non-symplectic involution, then $Y=X/\langle\iota\rangle$ is a smooth rational surface.
In particular $\Pic(Y)=H^2(Y,\IZ)$ is a unimodular lattice, thus the determinant of the lattice
$\tilde\pi^*\Pic(Y)=\Pic(Y)(2)$ equals $\pm 2^r$. 
Let $x_i$ be the class of  $E_i$, then 
$x_i\not\in \tilde\pi^*\Pic(Y)$ and $2x_i=\tilde\pi^*(y_i)$, 
where $y_i$ is the class of its image in $Y$.
The lattice generated by $\tilde\pi^*\Pic(Y)$ and $x_1,\dots,x_k$ has determinant 
$\pm 2^r/2^{k}=\pm 2^{r}/2^{r-a}=\pm 2^a$ by Theorem \ref{nikulinfactor}, thus it coincides with $H^2(X,\IZ)^+$.
 \end{proof}

In order to compute the triple $(r,a,\delta)$ for the lattice $H^2(X,\IZ)^+$ we follow these steps:
\begin{itemize}
\item we identify the irreducible components of the fixed locus of $\iota$ in $W$ and the number of 
their intersection points: $W^{\iota}$ always contains the curve $C$ defined by $x=0$ and 
possibly one more curve, 
defined by the vanishing of another coordinate;
\item denoting by $B$ the branch locus of $\pi$, we identify the singularities of $\IP$ on $B$;
\item we compute $r=\rk \Pic(Y)$ as the sum of the Picard number of $\widetilde\IP$ 
with the number $s$ of singular points of $\gamma_1^*B$;
\item we recall that $a=22-r-2g$ by Theorem \ref{nikulinfactor}, thus to obtain $a$ it is enough to compute the genus of the curve $x=0$ by means of the formula given in \S 2.1;

\item in order to identify $\delta$, we compute the invariant lattice of $X$ as follows: we observe that 
$\tilde\pi^*\Pic(Y)=M(2)\oplus (-2)^s$, where $M=\Pic(\tilde\IP)$, and we add to this lattice the classes of the rational curves in the ramification locus of $\tilde\pi$ (their classes can be computed by looking at their intersection with the generators of $\tilde\pi^*\Pic(Y)$).
\end{itemize}
The invariant $r$ can also be computed as follows: let ${\rm Exc}(\gamma)$ be the lattice generated by the exceptional divisors of $\gamma$.  Then $r=1+ \rk{\rm Exc}(\gamma)^{\iota}$, where $1=\rk H^2(W,\IZ)^{+}=\rk \Cl(\IP)$ and $\rk{\rm Exc}(\gamma)^{\iota}$ equals the number of $\iota$-orbits in the exceptional locus of  $\gamma$.
\begin{remark}\label{ind}
We observe that the triple $(r,a,\delta)$ only depends on the weight vector $w=(w_1,w_2,w_3,w_4)$.
In fact, the configuration of the irreducible components of $W^{\iota}$ (i.e. their number and mutual intersections) only depends on $w$, and the same holds for the singularities of $W$ by Lemma \ref{sing}.

\end{remark}



\begin{example}\label{tor} We now compute the triple $(r,a,\delta)$ for the surface $W$ in Example \ref{9432}.
The projective plane $\IP=\IP(4,3,2)\cong \IP(2,3,1)$ has a singular point 
of type $A_1$ at $(1,0,0)$ and one of type $A_2$ at $(0,1,0)$.
Its minimal resolution is a toric variety $\tilde \IP$ whose fan has six rays:
$$r_1=(-1,  1),\ r_2=( 0, -1),\ r_3=( 2,  1),\ r_4=( 1,  1),\ r_5=( 0,  1),\ r_6=( 1,  0),$$
where $r_6$ corresponds to the exceptional divisor over the $A_1$ singularity, 
$r_4,r_5$ to the two components of the exceptional divisor over the $A_2$ singularity 
and $r_3$ to the proper transform of the line through the two singular points of $\IP$.
A basis of  $\Pic(\tilde\IP)$ is given by the classes $v_1,v_2,v_3,v_4$ of the last four rays.
With respect to this basis, the classes of the six rays are given by the columns of the following matrix
$$\left(
\begin{matrix}
0 & 1 & 0 & 0 & 1 & 0\\
1 & 1 & 0 & 0 & 0 & 1\\
1 & 2 & 0 & 1 & 0 & 0\\
    2 & 3 & 1 & 0 & 0 & 0
    \end{matrix}
    \right)
   .$$
An easy computation shows that the Picard lattice 
of $\tilde\IP$ has intersection matrix:
$$
M:=\left(
\begin{matrix}
-1 & 1 & 0 & 1 \\
1 & -2 & 1 & 0 \\
0 & 1 & -2 & 0 \\
1 & 0 & 0 & -2
    \end{matrix}
    \right)
$$
The branch locus $B$ of $\pi$ is the union of the curves 
$$B_1: f(y,z,w)=0\qquad B_2:\ z=0.$$
Observe that $B_1$ and $B_2$ intersect at $(1,0,0)$ and at $4=\lfloor \frac{d\cdot h_{24}}{w_2w_4}\rfloor$ 
other points (see the second point in Lemma \ref{sing}). 
The pull-back $\gamma_1^*B$ in $\widetilde\IP$ has three irreducible 
components: the proper transforms $\tilde B_1,\tilde B_2$ and the exceptional divisor 
$E$ over the singular point, with $\tilde B_1\cdot  \tilde B_2=4$ and  $\tilde B_i\cdot E=1,\ i=1,2$.
The surface $Y$ is the blow-up of $\widetilde \IP$ at the six singular points of $\gamma_1^*B$, 
thus its Picard lattice has intersection matrix $M\oplus (-1)^6$. We still denote by $\tilde B_1,\tilde B_2$ 
the proper transforms of the curves in $Y$.

Let $v_5,\dots,v_8$ be the classes of the exceptional divisors over the points in 
$\tilde B_1\cap \tilde B_2$, $v_9$ the one over $E\cap \tilde B_1$ and $v_{10}$ over $E\cap \tilde B_2$. 
We now compute $H^2(X,\IZ)^{+}$:  this is obtained by adding to the lattice $\tilde\pi^* \Pic(Y)=M(2)\oplus (-2)^6$ 
the classes of the rational curves in the fixed locus, in this case 
$\tilde\pi^*([\tilde B_2])/2=(3v_1+2v_2+v_3+v_4-v_5-v_6-v_7-v_8-v_{10})/2$ and $\tilde\pi^*([E])/2=(v_4-v_9-v_{10})/2$.
 Computing the discriminant group of the lattice by means of a computer algebra program, 
 we see that $\delta=1$.
 Thus $(r,a,\delta)=(10,6,1)$.
\end{example}
\section{The Berglund-H\"ubsch-Chiodo-Ruan mirror symmetry for K3 surfaces}\label{proof}

In this section we prove Theorem \ref{main} by means of a classification of K3 surfaces defined by 
a non-degenerate invertible potential of the form $W(x,y,z,w)=x^2-f(y,z,w)$ in some weighted projective space. The possible decompositions of the polynomial $f(y,z,w)$ as a sum of atomic types are the following, up to a permutation of the variables $y,z,w$: \\
\begin{enumerate}[i)] \itemsep 0.4cm
\item chain: $W_c=x^2-y^{a_1}z+z^{a_2}w+w^{a_3}$,
\item loop: $W_l=x^2-y^{a_1}z+z^{a_2}w+w^{a_3}y$,
\item fermat: $W_f=x^2-y^{a_1}+z^{a_2}+w^{a_3}$,
\item chain+fermat: $W_{cf}=x^2-y^{a_1}z+z^{a_2}+w^{a_3}$,
\item loop+fermat: $W_{lf}=x^2-y^{a_1}z+z^{a_2}y+w^{a_3}$.\\
\end{enumerate}
  
Borcea in \cite[Tables 1, 2, 3]{Borcea} and Yonemura in \cite[Table 2.2]{yonemura} classified equations of K3 surfaces in weighted projective 3-spaces, but these are not always 
of Delsarte type. Thus our first aim is to identify which weights $w$ admit a quasi-homogeneous equation of type $x^2=f(y,z,w)$, where $f$ is as in i), ii), iii), iv) or v), and then to write the possible equations for a given weight.
The result of this classification is contained in the first two columns of Tables \ref{K3fermatmirror}, \ref{K3loopmirror},  \ref{K3lfmirror},  \ref{K3cfmirror}, \ref{K3chainmirror}.


We briefly explain the notation in the tables. In the first column we number the K3 surface $W:\ x^2-f(y,z,w)=0$ following \cite{Borcea} and we put in parenthesis the number corresponding to the transposed K3 surface $W^T$. 
In the fourth column appear the Nikulin's invariants $(r,a,\delta)$ of the involution $\iota$ on  the resolution $X$ of $W$, computed as explained in section \S 5.
In the last two columns we compute the orders of the groups $\SL(W)$ (by means of Proposition~\ref{sl4}) and $J_W$.

\subsection{Trivial $\widetilde{\SL(W)}$}
Observe that in every case, except for the ones marked with $*$, we have that the group  $\SL(W)/J_W$ is trivial,
so that $W$ and $W^T$ are BHCR-mirror of each other. 
As the tables show, for such pairs the invariants $(r,a,\delta)$ are mirror in the sense of Dolgachev-Voisin (i.e. they are $(r,a,\delta)$ and $(20-r,a,\delta)$), thus 
the theorem is proved in these cases.

\begin{example}  We consider the case of the weight vector $w=(5,3,1,1).$
In order to determine which $f$ can appear in this weight, we need to solve the linear system
$$Aw=10e,$$
where $e$ is the column vector with all entries equal to $1$ and 
$A$ is the matrix associated to one of the potentials 
$W_c,W_l,W_f,W_{cf}, W_{lf}$  (and the ones obtained from them by a coordinate change).

\textbf{No.\,3a and 28  in Table \ref{K3chainmirror}.} If $A$ is associated to the potential $W_c$, 
the only solution is $(a_1,a_2,a_3)=(3,9,10)$, which gives the  
surface No.\,3a in Table \ref{K3chainmirror}.
This has only one $A_2$ singularity and $g=9$ so that $(r,a,\delta)=(3,1,1)$ (here $\delta$ is uniquely determined, see Figure \ref{ord2}). 
The surface $W^T$ is No. 28 in Table \ref{K3chainmirror}. Its configuration of singular fibers is $A_1+A_3+A_4+A_8$, so that  $(r,a,\delta)=(17,1,1)$. 
By Proposition~\ref{sl4} we find that $J_{W^T}=\SL(W^T)$, so $W$ and $W^T$ are BHCR-mirror and belong to Dolgachev-Voisin  mirror families.

\textbf{No.\,3b and 5 in Table \ref{K3chainmirror}.}
 If we consider the potential $W_c$ with the variables $y$ ad $z$ exchanged, we find another solution with  
$(a_1,a_2,a_3)=(7,3,10)$. This gives case No.\,3b in Table \ref{K3chainmirror}, which has again $(r,a,\delta)=(3,1,1)$. 
The surface $W^T$ is No.~5 in Table  \ref{K3chainmirror} and has $3A_1+A_3$ singular points invariant for $\iota$, so that $(r,a,\delta)=(7,3,1)$. 
Here $\SL(W)/J_W\cong \IZ/3\IZ$, so that $W$ and $W^T$ are not BHCR-mirror.

\textbf{No.\,3 and 23 in Table \ref{K3loopmirror}.}
If $A$ is of loop type then the only solution is $(a_1,a_2,a_3)=(3,9,7)$, which gives the surface No.\,3 in Table \ref{K3loopmirror}. This surface has again $(r,a,\delta)=(3,1,1)$ (see Remark \ref{ind}).
The surface $W^T$ is No.\,23  in Table \ref{K3loopmirror}.
Here again $J_{W}=\SL(W)$, so that $W$ and $W^T$ are BHCR-mirror and belong to Dolgachev-Voisin  mirror families.

\textbf{No.\,3 and 18 in Table \ref{K3cfmirror}.}
In the chain+fermat case we obtain as a unique solution $(a_1,a_2,a_3)=(3,10,10)$, which gives No.\,3 in Table \ref{K3cfmirror}. The surface $W^T$ is given by No. 18 in the same table, but in this case 
$\SL(W)/J_W\cong \IZ/2\IZ$, so that $W$ and $W^T$ are not BHCR-mirror.

\textbf{No.\,3 in Table \ref{K3lfmirror}.}
We obtain the solution $(a_1,a_2,a_3)=(3,7,10)$ in the loop+fermat case. Here $W=W^T$ and 
$\SL(W)/J_W\cong \IZ/4\IZ$.

We will discuss the last cases  in the next subsection.
 \end{example}
\subsection{Non trivial $\widetilde{\SL(W)}$}\label{group}
In this case the BHCR-mirror pairs are given by the minimal resolutions of $W/\tilde G$ and $W^T/\tilde G^T$,
where $\tilde G^T=\SL(W^T)/J_{W^T}$  by Proposition \ref{grouprop}.
We recall that, by Proposition \ref{actions}, the group $\tilde G$ acts symplectically on $W$ and its minimal resolution $X$. 
Moreover, since it is finite and abelian it appears in the list of the $15$ possible finite symplectic abelian groups given by Nikulin in \cite{nikulin}.
Since the involution $\iota$ commutes with $\tilde G$ (which is generated by diagonal automorphisms), 
then $\iota$ clearly induces a non-symplectic involution $j$ on $X/\tilde G$ and on its minimal resolution $Y$.
We are thus interested in computing the triple $(r,a,\delta)$ for such involution on $Y$.
We have a commutative diagram:

\begin{eqnarray}\label{diagpotential}
\xymatrix{
  &X\ar[r]^-{\gamma}\ar[d]&W\ar[d]^q\\
Y\ar[r]^{\eta_2}&X/\tilde{G}\ar[r]^{\eta_1}&W/\tilde{G}
}
\end{eqnarray}
where we still denote by $\tilde{G}$ its lifting to $X$, $\eta_2$ is the minimal resolution of $X/\tilde G$ and  
$\eta_2\circ\eta_1$ the minimal resolution of $W/\tilde{G}$, whose singular locus is the image of the singular locus of $W$ and of the points with non trivial stabilizer for $G$.
The rank $r$ of the invariant lattice $H^2(Y,\IZ)^j$ equals $1$ plus the number of $j$-orbits of the exceptional locus in $Y$ and the curve of maximal genus in $\Fix(j)$ is isomorphic to $q(C)$, thus its genus $g$ can be computed by means of the Riemann-Hurwitz formula.
Finally $a$ can be computed by means of the formula in Theorem \ref{nikulinfactor}.

If $\tilde G$ is cyclic of prime order $p>2$, then the invariant $\delta$ of $(Y,j)$ equals the one of $(X,\iota)$ by Proposition \ref{delta}. Otherwise, we need a deeper analysis to compute explicitely a basis of 
$H^2(Y,\IZ)^{+}$ as explained in section \S5.

\begin{example}
 We now show that the surfaces No.\,3b and No.~5 in Table  \ref{K3chainmirror} are Dolgachev-Voisin mirror.

{\bf No.\,3b in Table  \ref{K3chainmirror}}. 
By Proposition \ref{order} and Corollary \ref{sl} a generator for $\SL(W)$  and $\widetilde{G}\cong\IZ/3\IZ$ is $\tilde{g}:=(1,14/15,7/15,3/5)$, with respect to the coordinates $x,z,y,w$. 
A local analysis in the charts shows that the point $(0:1:0:0)\in W$ is an $A_2$ singularity fixed by $\tilde{g}$, hence it induces an $A_8$ singularity in the quotient $W/\widetilde{G}$. The remaining fixed points of $\tilde g$ are $(0:0:1:0)$,     $(1:0:1:0)$ and $(-1:0:1:0)$, which give $3$ singularities of type $A_2$ in the quotient $W/\tilde G$,  two of them interchanged by $\iota$. Thus $X$ contains $12$ $j$-orbits of exceptional curves and  $r=13$. 
The automorphism $\tilde{g}$ clearly preserves the curve $C$ and 
it fixes two points on it (corresponding to the $A_8$ singularity and to the first $A_2$ singularity). By Riemann-Hurwitz formula, its image $q(C)$ has genus $3$. In conclusion the invariants of $j$ are $(r,a,\delta)=(13,3,1)$ (here $\delta$ is uniquely determined, see Figure \ref{ord2}), thus $Y$ and the surface No. 5. in Table \ref{K3chainmirror} are BCHR-mirror  
and belong to Dolgachev-Voisin   mirror families.
\par {\bf No.~5 in Table  \ref{K3chainmirror}}. 
We recall that $W$ has one $A_3$ and $3A_1$ singularities fixed by $\iota$.
By Proposition \ref{order} and Corollary \ref{sl} a generator for $\SL(W)/(J_W)$ is  $\tilde{g}:=(1,20/21,10/21,4/7)\in\SL(W)$.  
A local analysis in the charts shows that the point $(0:1:0:0)\in W$ 
is an $A_3$ singularity fixed by $\tilde g$, which gives an $A_{11}$ singularity in $W/\widetilde{G}$. 
Moreover, the points $(1:0:1:0),(0:0:0:1)$ are smooth points in $W$ fixed by $\tilde{g}$, thus giving two $A_2$ singularities of the quotient. The $3$ singularities of type $A_1$ are permuted by $\tilde{g}$ and give a point of type $A_1$ in the quotient. 
Thus $Y$ contains $11+2\cdot 2+1=16$ orbits of exceptional curves. Moreover, the genus of the curve of maximal genus is  $2$, so that the invariants of $j$ are $(r,a,\delta)=(17,1,1)$, so $Y$ and the surface No.~3b in Table  \ref{K3chainmirror} are  BCHR-mirror  and belong to Dolgachev-Voisin  mirror families. \end{example}

\begin{example} We now show that the surfaces No.\,3 and No.~18 
in Table  \ref{K3cfmirror} belong to Dolgachev-Voisin   mirror families.

{\bf No.\,3 in Table  \ref{K3cfmirror}}.  
In this case we need a deeper analysis to determine the invariant $\delta$.
The involution $\sigma$ induces the involution $\bar\sigma(y,z,w)=(y,z,-w)$ in $\IP:=\IP(3,1,1)$ 
and the involution $\iota$ induces an involution $\bar\iota$ on $W/\langle \sigma \rangle$ and $Y$.
We observe that we have the following commutative diagram. 
The map $Y\to W/\langle \sigma \rangle$ is the minimal resolution and 
$b\circ r:Z\to \IP/\langle \bar\sigma\rangle$ is obtained composing the minimal resolution 
$r$ of $\IP/\langle \bar\sigma\rangle$ with the blow-up $b$ of the singular locus of $r^*B$,
where $B$ is the branch locus of $\bar\pi$.
\begin{eqnarray}\label{proj}
\xymatrix{
W\ar[r]\ar[d]_{\pi} & W/\langle \sigma \rangle\ar[d]^{\bar \pi} & Y\ar[l]\ar[d]^{\bar \pi}\\
\IP\ar[r] & \IP/\langle\bar\sigma\rangle & Z\ar[l]
}
\end{eqnarray}
Observe that $\IP/\langle \bar\sigma\rangle\cong \IP(3,1,2)$ and $B$ is the union 
of the curves $B_1,B_2$ defined by $f(y,z,w)=0$ and $w=0$, which intersect 
at three smooth points and at the singular point $Q_1:=(1:0:0)$.
The projective plane $\IP(3,1,2)$ has a singular point of type $A_2$ at $Q_1$ and a 
point of type $A_1$ at $Q_2=(0:0:1)$, thus its resolution is a toric variety with Picard number $4$.
 Moreover $r^*B$ has $6$ double points, thus the surface $Z$ has Picard number $10$ 
 and its Picard lattice can be explicitely computed as in Example \ref{tor}.
 The invariant lattice $H^2(Y,\IZ)^+$ has rank $10$ and, by Lemma \ref{invlatt},
 it is the lattice obtained by adding to $\pi^*\Pic(Z)$ the classes of the two 
 rational curves in $\Fix(\bar\iota)$.
 An explicit computation, following the method explained in \S 5, gives that $\delta=0$.

\end{example}

We discuss one more case in detail, since here the group acting on the surface $W^T$ is not  cyclic.
\begin{example}\par {\bf No.~1 in Table \ref{K3fermatmirror}.} The equation 
$
x^2=y^6+z^6+w^6
$ 
defines a smooth K3 surface $W$ of degree $d=6$  in  $\IP(3,1,1,1)$ and $(r,a,\delta)=(1,1,1)$.
The group $\widetilde{G}=\SL(W)/J_W$ is of order $12$. By Nikulin's classification \cite{nikulin} of 
finite abelian groups acting symplectically on a K3 surface we have that 
$\widetilde{G}\cong\IZ/2\IZ\times\IZ/6\IZ$. The group $J_W$ is generated by the element $(1/2,1/6,1/6,1/6)$ and observe that the elements 
$(1/2,1/2,1,1)$ and $(1,1/6,5/6,1)$  generate  $\widetilde{G}$. Denote by $(1,0)$ the generator of order $2$ and by $(0,1)$ the generator of order $6$. Again by \cite{nikulin} we know that we have 
the following configuration of fixed points:
\begin{eqnarray*}
\xymatrix{
H^2_{(0,3)}(6)\ar@{-}[r]&H^6_{(0,1)}(2)\ar@{-}[r]&H^3_{(0,2)}(0)\ar@{-}[r]\ar@{-}[d]&H^6_{(1,2)}(2)\ar@{-}[r]&H^2_{(1,0)}(6)\\
&&H^6_{(1,1)}(2)\ar@{-}[r]&H^2_{(1,3)}(6)&\\
}
\end{eqnarray*}
where we follow the notation of \cite{nikulin} denoting by $H^m_{x}(t)$ the cyclic group of order $m$ with generator $x$, and $t$  denotes the number of fixed points having $H_x^m$ as stabilizer.

Looking at the diagram and by a local analysis one sees that it is enough to study 
the fixed points of the elements $(1,0)$, $(0,3)$ and $(1,3)$. The fixed points of 
$(1,0)$ are $(1:1:0:0)$, $(-1:1:0:0)$ and $(0:0:1:\xi^j)$, with $\xi=\exp(2\pi i/12)$ and $j=1,3,5,7,9,11$. The first two points are interchanged by $\iota$ and have in fact stabilizer of order $6$. 
The computation for the elements $(0,3)$ and $(1,3)$ is similar.
 We find that the quotient $W/\widetilde{G}$ has in total $3A_5$ and $3A_1$ singularities which gives $r=19$. Finally, by an easy computation, one sees that the curve $C$ contains $18$ points with stabilizer group of order  $2$ hence by Riemann-Hurwitz formula the curve  $C_1$ has genus $1$. In this case $\delta=1$ by Figure \ref{ord2}, thus the invariants for $Y$ are $(r,a,\delta)=(19,1,1)$. This shows that the surfaces $Y$ and $W$ belong to Dolgachev-Voisin  mirror families.\end{example}

In Table \ref{K3fermatmirror} and \ref{K3lfmirror} there are cases where $\SL(W)/J_W$
has non trivial proper subgroups $\widetilde{G}=G/J_W$.
By making similar computations of the Nikulin invariants $(r,a,\delta)$ for 
$W/\widetilde{G}$ and $W/\widetilde{G^T}$ one obtains that the corresponding 
minimal resolutions are mirror K3 surfaces, proving Theorem~\ref{main}
also in these cases. We specify however one more case, in which the method for computing $\delta$
uses a fake weighted projective plane.
\begin{definition}
A fake weighted projective space is a $\mathbb{Q}$-factorial toric variety with Picard number one.
\end{definition}
By \cite[Proposition 4.7]{con} (see also \cite[Corollary 2.3]{kas}) every fake weighted projective space is a quotient of a weighted projective space
by a finite group acting freely in codimension one.  
\begin{example}\par {\bf No.~30 in Table \ref{K3fermatmirror}.}
Let $W=x^2-y^4-z^8-w^8=0$ in $\mathbb P(4,2,1,1)$. The surface has two $A_1$ singular points at $(1:1:0:0), (-1:1:0:0)$ which are exchanged by $\iota$ and $\iota$ fixes a curve $C$ of genus $9$, so that $(r,a)=(2,2)$ and by Nikulin's table
$\delta=0$ . Moreover $\SL(W)/J_W\cong \IZ/2\IZ\times \IZ/4\IZ$ and it is generated by $(1/2,1/2,1,1), (1,1/4,3/4,1)$, which for simplicity we will call $(1,0)$ and $(0,1)$ respectively. By the results of the previous sections one can easily compute the values for $(r,a)$ for the surface $W$ and for its quotients by subgroups of $\widetilde{G}$. To compute the invariant $\delta$ of the quotient $W/\widetilde{G}$, one has a similar diagram as diagram \eqref{proj}, just replace $\langle \bar\sigma\rangle$ by the induced group on $\mathbb{P}:=\mathbb{P}(2,1,1)$ generated by  $\langle (1/2,1,1), (1/4,3/4,1) \rangle$. The quotient of $\mathbb{P}$ by this group is a fake weighted projective plane, with fan of its minimal resolution defined by $8$ rays (computation with MAGMA \cite{magma}):
$$
(-1,0),\quad (1,-2), \quad (1,2), \quad (0,-1), \quad (0,1), \quad (1,1), \quad (1,0), \quad (1,-1) 
$$
One thus proceeds as described in the previous sections to compute $\delta$. The results are resumed in Table
\ref{subgroupsfermatcase30}.
\end{example}


\newpage
\section{Tables}

 \begin{table}[h!]
$$
\begin{array}{r|l|l|c|c|c|c}
\mbox{No.} &(w_1,w_2,w_3,w_4)&f(y,z,w)&(r,a,\delta)&|\SL(W)|&|J_{W}|&\SL(W)/J_W\\
\hline
*1& (3,1,1,1)&y^6+z^6+w^6&(1,1,1)&72&6&\IZ/2\IZ\times\IZ/6\IZ
\\
*2&(5,2,2,1)&y^5+z^{5}+w^{10}&(6,4,0)&50&10&\IZ/5\IZ
\\
*8& (9,6,2,1)&y^3+z^9+w^{18}&(6,2,0)&54&18&\IZ/3\IZ\\
18&(15,10,3,2)&y^3+z^{10}+w^{15}&(10,4,0)&30&30&1
\\
26&(21,14,6,1)&y^3+z^7+w^{42}&(10,0,0)&42&42&1\\
*30&(4,2,1,1)&y^4+z^8+w^8&(2,2,0)&64&8&\IZ/2\IZ\times \IZ/4\IZ
\\
*34&(10,5,4,1)&y^4+z^5+w^{20}&(6,4,0)&40&20&\IZ/2\IZ\\
*41&(6,3,2,1)&y^4+z^6+w^{12}&(4,4,1)&48&12&\IZ/2\IZ\times\IZ/2\IZ
\\
*42&(6,4,1,1)&y^3+z^{12}+w^{12}&(2,0,0)&72&12&\IZ/6\IZ
\\
*45&(12,8,3,1)&y^3+z^8+w^{24}&(6,2,0)&48&24&\IZ/2\IZ\\
\end{array}
$$ 
\caption{The fermat mirror cases}\label{K3fermatmirror}
\end{table}

\begin{table}[h!]
$$
\begin{array}{r|l|l|c|c|c}
\mbox{No.} &(w_1,w_2,w_3,w_4)&f(y,z,w)&(r,a,\delta)&|\SL(W)|&|J_{W}|\\
\hline
*(1)1& (3,1,1,1)&y^5z+z^5w+w^5y &(1,1,1)&6&42\\
(23)3&(5,3,1,1)&y^3z+z^9w+w^7y&(3,1,1)&10&10\\
(13)11& (11,7,3,1)&y^3w+w^{19}z+z^5y&(9,1,1)&22&22\\
(11)13&(13,7,5,1)&y^3z+z^5w+w^{19}y&(11,1,1)&26&26\\
(3)23&(19,11,5,3)&y^3z+z^7w+w^9y&(17,1,1)&38&38\\
\end{array}
$$ 
\caption{The loop mirror cases}\label{K3loopmirror}
\end{table}

\begin{table}[h!]
$$
\begin{array}{r|l|l|c|c|c|c}
\mbox{No.} &(w_1,w_2,w_3,w_4)&f(y,z,w)&(r,a,\delta)&|\SL(W)|&|J_{W}|&\SL(W)/J_W\\
\hline
*1& (3,1,1,1)&y^5z+z^5y+w^6 &(1,1,1)&48&6&\IZ/8\IZ\\
*2& (5,2,2,1)&y^4z+z^4y+w^{10}&(6,4,0)&30&10&\IZ/3\IZ
\\ 
*3&(5,3,1,1)&y^3z+z^{7}y+w^{10}&(3,1,1)&40&10&\IZ/4\IZ
\\
*5&(7,4,2,1)&y^3z+z^5y+w^{14}&(7,3,1)&28&14&\IZ/2\IZ\\
6&(9,4,3,2)&y^4w+w^7y+z^6&(10,6,1)&18&18&1
\\
10&(11,6,4,1)&y^3z+z^4y+w^{22}&(10,2,1)&22&22&1
\\
*30&(4,2,1,1)&z^{7}w+w^7z+y^4&(2,2,0)&48&8&\IZ/6\IZ
\\
*31&(8,4,3,1)&z^{5}w+w^{13}z+y^{4}&(6,4,0)&32&16&\IZ/2\IZ
\\
*32&(8,5,2,1)&y^{3}w+w^{11}y+z^8&(6,2,0)&32&16&\IZ/2\IZ\\
36&(14,9,4,1)&y^{3}w+w^{19}y+z^7&(10,0,0)&28&28&1\\
*42&(6,4,1,1)&z^{11}w+w^{11}z+y^3&(2,0,0)&60&12&\IZ/5\IZ\\
47&(18,12,5,1)&z^7w+w^{31}z+y^3&(10,0,0)&36&36&1\\
\end{array}
$$ 
\caption{The loop+fermat mirror cases}\label{K3lfmirror}
\end{table}
\clearpage

\small{
\begin{table}[!t]
$$
\begin{array}{r|l|l|c|c|c}
\mbox{No.} &(w_1,w_2,w_3,w_4)&f(y,z,w)&(r,a,\delta)&|\SL(W)|&|J_{W}|\\
\hline
*(15)1& (3,1,1,1)&y^5z+z^6+w^6 &(1,1,1)&12&6\\
*(33b)2a& (5,2,2,1)&y^4z+z^5+w^{10}&(6,4,0)&20&10
\\ 
  (39)2b&(5,2,2,1)&w^8y+y^5+z^5&(6,4,0)&10&10
  \\
*(18)3&(5,3,1,1)&y^3z+z^{10}+w^{10}&(3,1,1)&20&10
\\
(35a)4&(7,3,2,2)&y^4z+z^7+w^7&(10,6,0)&14&14
\\
(24)5&(7,4,2,1)&y^3z+z^7+w^{14}&(7,3,1)&14&14\\
*(41a)6&(9,4,3,2)&y^4w+w^9+z^6&(10,6,0)&36&18
\\
*(8a)7&(9,5,3,1)&y^3z+z^6+w^{18}&(7,3,1)&36&18\\
*(7)8a&(9,6,2,1)&z^6y+y^3+w^{18}&(6,2,0)&36&18\\
(46)8b&(9,6,2,1)&w^{12}y+y^3+z^9&(6,2,0)&18&18\\
(48)8c&(9,6,2,1)&w^{16}z+z^9+y^3&(6,2,0)&18&18\\
*(1)15&(15,6,5,4)&w^6y+y^5+z^6&(12,6,1)&60&30\\
(34a)16&(15,7,6,2)&y^4w+w^{15}+z^5&(14,4,0)&30&30
\\
(19)17&(15,8,6,1)&y^3z+z^5+w^{30}&(11,1,1)&30&30\\
*(3)18&(15,10,3,2)&w^{10}y+y^3+z^{10}&(10,4,0)&60&30
\\
(17)19&(15,10,4,1)&z^5y+y^3+w^{30}&(9,1,1)&30&30\\
(5)24&(21,14,4,3)&z^7y+y^3+w^{14}&(13,3,1)&42&42\\
(45b)25&(21,14,5,2)&z^8w+y^3+w^{21}&(14,2,0)&42&42\\
(36)26a&(21,14,6,1)&w^{28}y+y^3+z^7&(10,0,0)&42&42\\
(47)26b&(21,14,6,1)&w^{36}z+y^3+z^7&(10,0,0)&42&42\\
(42b)29&(33,22,6,5)&w^{12}z+z^{11}+y^3&(18,0,0)&66&66\\
\hline
\hline
*(35b)30a&(4,2,1,1)&z^{7}w+w^8+y^4&(2,2,0)&16&8
\\
*(43)30b&(4,2,1,1)&z^{6}y+y^4+w^8&(2,2,0)&16&8
\\
*(31a)31a&(8,4,3,1)&z^{4}y+y^4+w^{16}&(6,4,0)&32&16
\\
*(34b)31b&(8,4,3,1)&z^{5}w+w^{16}+y^4&(6,4,0)&32&16
\\
*(45a)32&(8,5,2,1)&y^{3}w+w^{16}+z^8&(6,2,0)&32&16\\
*(41b)33a&(10,5,3,2)&z^{6}w+w^{10}+y^4&(8,6,1)&40&20\\
*(2a)33b&(10,5,3,2)&z^{5}y+y^4+w^{10}&(8,6,1)&40&20
\\
(16)34a&(10,5,4,1)&w^{15}y+y^4+z^{5}&(6,4,0)&20&20
\\
*(31b)34b&(10,5,4,1)&w^{16}z+z^{5}+y^4&(6,4,0)&40&20
\\
(4)35a&(14,7,4,3)&w^{7}y+y^{4}+z^7&(10,6,0)&28&28
\\
*(30a)35b&(14,7,4,3)&w^{8}z+z^7+y^4&(10,6,0)&56&28
\\
(26a)36&(14,9,4,1)&y^{3}w+w^{28}+z^7&(10,0,0)&28&28\\
(2b)39&(20,8,7,5)&z^{5}w+y^5+w^8&(14,4,0)&40&40
\\
\hline
\hline
*(6)41a&(6,3,2,1)&w^{9}y+y^4+z^6&(4,4,1)&24&12
\\
*(33a)41b&(6,3,2,1)&w^{10}z+z^6+y^4&(4,4,1)&24&12\\
*(44)42a&(6,4,1,1)&z^{8}y+y^3+w^{12}&(2,0,0)&24&12
\\
(29)42b&(6,4,1,1)&z^{11}w+w^{12}+y^3&(2,0,0)&12&12\\
*(30b)43&(12,5,4,3)&y^4z+z^6+w^8&(10,6,1)&48&24
\\
*(42a)44&(12,7,3,2)&y^3z+z^8+w^{12}&(10,4,0)&48&24
\\
*(32)45a&(12,8,3,1)&w^{16}y+y^3+z^8&(6,2,0)&48&24\\
(21)45b&(12,8,3,1)&w^{21}z+z^8+y^3&(6,2,0)&24&24\\
(8b)46&(18,11,4,3)&y^3w+w^{12}+z^9&(14,2,0)&36&36\\
(26b)47&(18,12,5,1)&z^7w+w^{36}+y^3&(10,0,0)&36&36\\
(8c)48&(24,16,5,3)&z^9w+w^{16}+y^3&(14,2,0)&48&48\\
\end{array}
$$
\caption{The chain+fermat mirror cases}\label{K3cfmirror}
\end{table}
}

\begin{table}[h!]
$$
\begin{array}{r|l|l|c|c|c}
\mbox{No.} &(w_1,w_2,w_3,w_4)&f(y,z,w)&(r,a,\delta)&|\SL(W)|&|J_{W}|\\
\hline
(27)1&(3,1,1,1)&y^5z+z^5w+w^6&(1,1,1)&6&6\\
(37)2&(5,2,2,1)&w^8z+z^4y+y^5&(6,4,0)&10&10
\\
(28)3a& (5,3,1,1)&y^3z+z^9w+w^{10}&(3,1,1)&10&10\\
*(5)3b& (5,3,1,1)&z^7y+y^3w+w^{10}&(3,1,1)&30&10
\\
*(30)4& (7,3,2,2)&y^4z+z^6w+w^7&(10,6,0)&42&14
\\
*(3b)5& (7,4,2,1)&z^7+zy^3+yw^{10}&(7,3,1)&42&14
\\
(14)7&(9,5,3,1)&w^{13}y+y^3z+z^6&(7,3,1)&18&18\\
(38)8&(9,6,2,1)&w^{16}z+z^6y+y^3&(6,2,0)&18&18\\
(17)11&(11,7,3,1)&z^5y+y^3w+w^{22}&(9,1,1)&22&22\\
(31a)12&(13,6,5,2)&z^4y+y^4w+w^{13}&(14,4,0)&26&26
\\
(19)13&(13,7,5,1)&y^3z+z^5w+w^{26}&(11,1,1)&26&26\\
(7)14&(13,8,3,2)&w^{13}+wy^3+yz^6&(13,3,1)&26&26\\
(31b)16&(15,7,6,2)&y^4w+w^{12}z+z^5&(14,4,0)&30&30
\\
(11)17&(15,8,6,1)&z^5+zy^3+yw^{22}&(11,1,1)&30&30\\
(13)19&(15,10,4,1)&y^3+yz^5+zw^{26}&(9,1,1)&30&30\\
(32)25&(21,14,5,2)&z^8w+w^{14}y+y^3&(14,2,0)&42&42\\
(1)27& (25,10,8,7)&y^5+yz^5+zw^6 &(19,1,1)&50&50\\
(3a)28& (27,18,4,5)&y^3+yz^9+zw^{10}&(17,1,1)&54&54\\
\hline
\hline
*(4)30&(4,2,1,1)&y^4+yz^6+zw^7&(2,2,0)&24&8
\\
(12)31a&(8,4,3,1)&y^4+yz^4+zw^{13}&(6,4,0)&16&16
\\
(16)31b&(8,4,3,1)&y^4+yw^{12}+wz^5&(6,4,0)&16&16
\\
(25)32&(8,5,2,1)&z^8+zw^{14}+wy^3&(6,2,0)&16&16\\
(47)36&(14,9,4,1)&y^3w+w^{24}z+z^7&(10,0,0)&28&28\\
(2)37&(16,5,7,4)&w^8+wz^4+zy^5&(14,4,0)&32&32
\\
(8)38&(16,9,5,2)&w^{16}+wz^6+zy^3&(14,2,0)&32&32\\
(42)40&(22,13,5,4)&y^3z+z^8w+w^{11}&(18,0,0)&44&44\\
\hline
\hline
(40)42&(6,4,1,1)&y^3+yz^8+zw^{11}&(2,0,0)&12&12\\
(36)47&(18,12,5,1)&y^3+yw^{24}+wz^7&(10,0,0)&36&36\\
\end{array}
$$ 
\caption{The chain mirror cases}\label{K3chainmirror}
\end{table}

\begin{table}[!ht]
$$
\begin{array}{c|c|c|c|c|c}
\widetilde{G} & \rm{generators} & (r,a,\delta) & \widetilde{G^T} & \rm{generators} & (r,a,\delta)\\
\hline
\IZ/2\IZ & (1/2,1/2,1,1) & (8,6,1) & \IZ/6\IZ & (1,1/6,5/6,1) & (12,6,1)\\\hline
\IZ/2\IZ & (1,1/2,1/2,1) & (8,6,1) & \IZ/6\IZ & (1/2,1/3,1/6,1) & (12,6,1) \\\hline
\IZ/2\IZ & (1/2,1,1/2,1) & (8,6,1) & \IZ/6\IZ & (1/2,5/6,2/3,1) & (12,6,1)\\\hline
\IZ/3\IZ & (1,1/3,2/3,1) & (7,7,1) & \IZ/2\IZ\times\IZ/2\IZ & \substack{(1/2,1/2,1,1),\\(1,1/2,1/2,1)} & (13,7,1)
\end{array}
$$
\caption{The subgroups in the fermat case No. 1}\label{subgroupsfermatcase1}
\end{table}

\begin{table}[!ht]
$$
\begin{array}{c|c|c|c|c|c}
\widetilde{G} & \rm{generators} & (r,a,\delta) & \widetilde{G^T} & \rm{generators} & (r,a,\delta)\\
\hline
0 & 0  & (2,2,0) &  \IZ/2\IZ\times \IZ/4\IZ & (1,0), (0,1) & (18,2,0)
\\
\hline
\IZ/2\IZ & (1,0) & (10,6,0) & \IZ/4\IZ & (1,1) & (10,6,0)\\\hline
\IZ/4\IZ & (0,1) & (10,6,1) &  \IZ/2\IZ & (1,2) & (10,6,1) \\\hline
\IZ/2\IZ & (0,2) & (6,6,1) & \IZ/2\IZ\times \IZ/2\IZ & (1,0), (0,2) & (14,6,1)\\
\end{array}
$$
\caption{The subgroups in the fermat case No. 30}\label{subgroupsfermatcase30}
\end{table}
\newpage
\bibliographystyle{amsplain}
\bibliography{Biblio}

\providecommand{\bysame}{\leavevmode\hbox to3em{\hrulefill}\thinspace}
\providecommand{\MR}{\relax\ifhmode\unskip\space\fi MR }
\providecommand{\MRhref}[2]{%
  \href{http://www.ams.org/mathscinet-getitem?mr=#1}{#2}
}
\providecommand{\href}[2]{#2}
\begin{thebibliography}{10}

\bibitem{batyrev}
V.V. Batyrev, \emph{Dual polyhedra and mirror symmetry for {C}alabi-{Y}au
  hypersurfaces in toric varieties}, J. Algebraic Geom. \textbf{3} (1994),
  no.~3, 493--535.

\bibitem{BH}
P.~Berglund and T.~H{\"u}bsch, \emph{A generalized construction of mirror
  manifolds}, Nuclear Phys. B \textbf{393} (1993), no.~1-2, 377--391.
  \MR{1214325 (94k:14031)}

\bibitem{Borcea}
C.~Borcea, \emph{{$K3$} surfaces with involution and mirror pairs of
  {C}alabi-{Y}au manifolds}, Mirror symmetry, {II}, AMS/IP Stud. Adv. Math.,
  vol.~1, Amer. Math. Soc., Providence, RI, 1997, pp.~717--743. \MR{1416355
  (97i:14023)}

\bibitem{borisov}
L.~Borisov, \emph{{Berglund-H\"ubsch mirror symmetry via vertex algebras}},
  preprint, arXiv:1007.2633, 2010.

\bibitem{ChiodoRuan}
R.~Chiodo and Y.~Ruan, \emph{{LG/CY correspondence: the state space
  isomorphism}}, preprint, arXiv:0908.0908v2, 2010.

\bibitem{con}
H.~Conrads, \emph{Weighted projective spaces and reflexive simplices},
  Manuscripta Math. \textbf{107} (2002), no.~2, 215--227.

\bibitem{cortigo}
A.~Corti and Y.~Golyshev, \emph{{Hypergeometric Equations and Weighted
  Projective Spaces}}, preprint, arXiv:math/0607016v1, 2006.

\bibitem{coxkatz}
D.A. Cox and S.~Katz, \emph{Mirror symmetry and algebraic geometry},
  Mathematical Surveys and Monographs, vol.~68, American Mathematical Society,
  Providence, RI, 1999. \MR{1677117 (2000d:14048)}

\bibitem{dimca}
A.~Dimca, \emph{Singularities and topology of hypersurfaces}, Universitext,
  Springer-Verlag, New York, 1992. \MR{1194180 (94b:32058)}

\bibitem{dolgachevmirror}
I.V. Dolgachev, \emph{Mirror symmetry for lattice polarized {$K3$} surfaces},
  J. Math. Sci. \textbf{81} (1996), no.~3, 2599--2630, Algebraic geometry, 4.
  \MR{1420220 (97i:14024)}

\bibitem{dn}
I.V. Dolgachev and Nikulin V.V., \emph{{Exceptional singularities of {V.I.
  Arnold} and $K3$ surfaces}}, Proc. USSR Topological Conference in Minsk
  (1977).

\bibitem{ebelingzade}
W.~Ebeling and S.M. Gusein-Zade, \emph{{Saito duality between Burnside rings
  for invertible polynomials}}, preprint, arXiv:1105.1964v3, 2011.

\bibitem{fletcher}
A.R. Iano-Fletcher, \emph{Working with weighted complete intersections},
  Explicit birational geometry of 3-folds, London Math. Soc. Lecture Note Ser.,
  vol. 281, Cambridge Univ. Press, Cambridge, 2000, pp.~101--173. \MR{1798982
  (2001k:14089)}

\bibitem{kas}
A.M. Kasprzyk, \emph{Bounds on fake weighted projective space}, Kodai Math. J.
  \textbf{32} (2009), no.~2, 197--208.

\bibitem{krawitz}
M.~Krawitz, \emph{{FJRW rings and Landau-Ginzburg mirror symmetry}}, preprint,
  arXiv:0906.0796, 2009.

\bibitem{kreuzerskarke}
M.~Kreuzer and H.~Skarke, \emph{On the classification of quasihomogeneous
  functions}, Comm. Math. Phys. \textbf{150} (1992), no.~1, 137--147.
  \MR{1188500 (93k:32075)}

\bibitem{nikulinfactor}
V.V. Nikulin, \emph{Factor groups of groups of the automorphisms of hyperbolic
  forms with respect to subgroups generated by 2-reflections}, 1979,
  pp.~1156--1158.

\bibitem{nikulin}
\bysame, \emph{Finite groups of automorphisms of {K}\"ahlerian {$K3$}
  surfaces}, Trudy Moskov. Mat. Obshch. \textbf{38} (1979), 75--137. \MR{544937
  (81e:32033)}

\bibitem{nikulinmir}
\bysame, \emph{Integral quadratic forms and some of its geometric
  applications}, 1979, pp.~103--167.

\bibitem{nikulinlob}
\bysame, \emph{Discrete reflection groups in {L}obachevsky spaces and algebraic
  surfaces}, Proceedings of the {I}nternational {C}ongress of {M}athematicians,
  {V}ol. 1, 2 ({B}erkeley, {C}alif., 1986) (Providence, RI), Amer. Math. Soc.,
  1987, pp.~654--671. \MR{934268 (89d:11032)}

\bibitem{reid}
M.~Reid, \emph{Canonical {$3$}-folds}, Journ\'ees de {G}\'eometrie
  {A}lg\'ebrique d'{A}ngers, {J}uillet 1979/{A}lgebraic {G}eometry, {A}ngers,
  1979, Sijthoff \& Noordhoff, Alphen aan den Rijn, 1980, pp.~273--310.
  \MR{605348 (82i:14025)}

\bibitem{RS}
A.N. Rudakov and I.R. Shafarevich, \emph{Surfaces of type {$K3$} over fields of
  finite characteristic}, Current problems in mathematics, {V}ol. 18, Akad.
  Nauk SSSR, Vsesoyuz. Inst. Nauchn. i Tekhn. Informatsii, Moscow, 1981,
  pp.~115--207. \MR{633161 (83c:14027)}

\bibitem{voisin}
C.~Voisin, \emph{Miroirs et involutions sur les surfaces {$K3$}}, Ast\'erisque
  (1993), no.~218, 273--323, Journ{\'e}es de G{\'e}om{\'e}trie Alg{\'e}brique
  d'Orsay (Orsay, 1992). \MR{1265318 (95j:14051)}

\bibitem{magma}
C.~Playoust W.~Bosma, J.~Cannon, \emph{\em the {M}agma algebra system. {I}.
  {T}he user language, computational algebra and number theory (london, 1993)},
  J. Symbolic Comput. \textbf{24} (1997), no.~3-4, 235--265.

\bibitem{yonemura}
T.~Yonemura, \emph{Hypersurface simple {$K3$} singularities}, Tohoku Math. J.
  (2) \textbf{42} (1990), no.~3, 351--380. \MR{1066667 (91f:14001)}

\end{thebibliography}

\end{document}